\title{Infinite families of standard Cappell-Shaneson spheres}
\author{Kazunori Iwaki}
\email{iwaki.k.ab@m.titech.ac.jp}
\def\SL{SL(3;\mathbb{Z})}
\newcommand{\ztheta}[1]{\mathbb{Z}[\theta_{#1}]}
\newcommand{\Mod}[1]{\ (\mathrm{mod}\ #1)}
\theoremstyle{plain}
\newtheorem{theorem}{Theorem}[section]
\newtheorem{proposition}[theorem]{Proposition}
\newtheorem{corollary}[theorem]{Corollary}
\newtheorem{lemma}[theorem]{Lemma}
\newtheorem{conjecture}[theorem]{Conjecture}
\theoremstyle{definition}
\newtheorem{definition}[theorem]{Definition}
\newtheorem{example}[theorem]{Example}
\theoremstyle{remark}
\newtheorem{remark}[theorem]{Remark}
\begin{document}

\begin{abstract}
    Cappell-Shaneson homotopy 4-spheres (CS spheres) are potential counterexamples of the smooth 4-dimensional Poincar\'{e} conjecture. Akbulut proved that infinite CS spheres are diffeomorphic to the standard 4-sphere by Kirby calculus. Kim and Yamada found another family of CS spheres which is composed of standard CS spheres. In this paper, we prove more CS spheres are standard. We give 145 new infinite families of CS spheres which are diffeomorphic to the standard 4-sphere.
\end{abstract}
\maketitle

\section{Introduction}\label{section:intro}
    The smooth 4-dimensional Poincar\'{e} conjecture is one of the most important problems in differential topology:

    \begin{conjecture}[The smooth 4-dimensional Poincar\'{e} conjecture]
    Every homotopy 4-sphere is diffeomorphic to~$S^4$.
    \end{conjecture}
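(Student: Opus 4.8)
The plan is to prove the conjecture by taking an arbitrary smooth homotopy $4$-sphere $\Sigma$, reducing its handle structure until no $1$-, $2$-, or $3$-handles remain, and then invoking Cerf's theorem $\pi_0\,\mathrm{Diff}^+(S^3)=0$ to conclude $\Sigma\cong S^4$; the present paper supplies a key piece of the last, hardest step. First I would fix a self-indexing Morse function on $\Sigma$ and, by handle slides together with births and deaths of cancelling pairs, arrange a handle decomposition with a single $0$-handle and, dually, a single $4$-handle. Writing $n_i$ for the number of $i$-handles, connectedness and $\chi(\Sigma)=\chi(S^4)=2$ force $n_2=n_1+n_3$, while $\pi_1(\Sigma)=1$ and $H_*(\Sigma)\cong H_*(S^4)$ force the cellular boundary $\partial_2\colon\mathbb{Z}^{n_2}\to\mathbb{Z}^{n_1}$ to be split surjective and $\partial_3$ to be an isomorphism onto $\ker\partial_2$. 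Since the $3$-handles of $\Sigma$ are the $1$-handles of the upside-down decomposition, it suffices to prove one statement: in a homotopy $4$-sphere, the $1$-handles can always be geometrically cancelled. Applying it to $\Sigma$ and to its dual leaves $n_1=n_3=0$, hence $n_2=0$, so $\Sigma$ is built from one $0$-handle and one $4$-handle, whence $\Sigma\cong S^4$.

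The heart of the matter is this cancellation. The $1$-handles together with a complementary block of $n_1$ of the $2$-handles present the trivial group, $\mathcal{P}=\langle x_1,\dots,x_{n_1}\mid r_1,\dots,r_{n_1}\rangle$, realized by an attaching link in $\#^{n_1}(S^1\times S^2)$. Sliding a $2$-handle over another, sliding it over a $1$-handle, and introducing or deleting a cancelling $1$/$2$ pair realize exactly the stable Andrews--Curtis moves on $\mathcal{P}$, together with the framing bookkeeping of Kirby calculus and the extra freedom of sliding over the remaining $2$-handles. So the goal becomes: trivialize every such $\mathcal{P}$ by stable Andrews--Curtis moves, and then promote that trivialization to an honest sequence of handle slides, undoing smoothly any $S^2$-stabilizations and respecting framings.

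The main obstacle is that both halves of this goal are open. The Andrews--Curtis conjecture itself is unresolved, with celebrated candidate counterexamples — the Akbulut--Kirby family $\mathrm{AK}(n)$ and the Miller--Schupp presentations — and even when a \emph{stable} trivialization exists it is a separate, $4$-dimensional problem to realize it by handle moves without leaving behind $\mathbb{CP}^2$- or $S^2\times S^2$-summands. I would attack this from two sides. First, import input unavailable to a purely combinatorial approach: a Whitney-move argument in the $5$-dimensional trace, or a controlled use of the (still conjectural) smooth $4$-dimensional $s$-cobordism theorem, to convert stable Andrews--Curtis trivializations — which are frequently available in practice — into genuine ones. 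Second, dispose of the structured families of attaching links coming from all known constructions of homotopy $4$-spheres — Cappell--Shaneson surgeries, Gluck twists on knotted $2$-spheres, fiber sums and logarithmic transforms — by exhibiting explicit handle slides carrying each to the empty link. The present paper is precisely an instalment of this second arm in the Cappell--Shaneson case: building on Akbulut's and Kim--Yamada's earlier reductions, it adds $145$ new infinite families, shrinking the set of Cappell--Shaneson presentations still needing either a fresh explicit argument or the combinatorial input above.

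A word on why the obstacle resists the usual tools. No currently known smooth invariant separates a homotopy $4$-sphere from $S^4$: Seiberg--Witten and Donaldson invariants, Heegaard Floer theory, and Bauer--Furuta classes all vanish or are indeterminate on homotopy $4$-spheres, so gauge theory can neither certify a counterexample as exotic nor certify its absence. Ricci flow, decisive in dimension $3$, develops $4$-dimensional singularities — along embedded $\mathbb{CP}^1$'s, or via neckpinches lacking a Hamilton--Ivey-type estimate — whose surgery theory is not understood. And the trisection reformulation, that every homotopy $4$-sphere admits a trisection which one then tries to destabilize to genus $0$, again recasts the problem as simplifying the gluing data of a Heegaard-like splitting, an Andrews--Curtis-flavoured question. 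Every route funnels into the same stabilization-and-cancellation bottleneck, which is exactly what the case-by-case program contributed to here is designed to erode.
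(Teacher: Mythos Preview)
Your proposal is not a proof; it is a sketch of a research program that you yourself acknowledge rests on open problems. The paper does not prove this statement either: it is stated as a \emph{conjecture}, and the paper's contribution is to verify that certain specific Cappell--Shaneson homotopy $4$-spheres are diffeomorphic to $S^4$, not to resolve the conjecture in general. So there is nothing in the paper to compare your argument against.

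The genuine gap in your proposal is exactly where you flag it. Your reduction hinges on the claim that ``in a homotopy $4$-sphere, the $1$-handles can always be geometrically cancelled,'' and you then propose to achieve this via (i) the stable Andrews--Curtis conjecture together with a promotion step using ``a controlled use of the (still conjectural) smooth $4$-dimensional $s$-cobordism theorem,'' and (ii) case-by-case elimination of known constructions. Item (i) invokes two major open conjectures; assuming either one is not a proof step but a hypothesis at least as strong as what you are trying to prove. Item (ii), even if completed for every currently known family, would not establish the conjecture, since it says nothing about homotopy $4$-spheres arising from constructions not on your list. In short, every logical path in your outline either passes through an unproved conjecture or terminates in an incomplete enumeration, so no deduction of the conclusion has been given.
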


	Since the topological 4-dimensional Poincar\'{e} conjecture was solved by Freedman in 1982 \cite[Theorem 1.6]{Freedman:1982-1}, homotopy $4$-sphere is homeomorphic to $S^4$. As for the smooth Poincar\'{e} conjecture in other dimension, the number of exotic $n$-spheres ($n \neq 4$) are well understood. The table~\ref{table:difst} shows the number of differential structures on $S^n$ for $1 \leq n \leq 9$. Please read Kervaire and Milnor \cite{Kervaire-Milnor:1963-1} for background reading.

    \begin{table}[h]\label{table:difst}
    
    \centering
    \caption{the number of differential structures on $S^n$}
    
    \begin{tabular}{c|r|r|r|r|r|r|r|r|r}
    
    n & 1 & 2 & 3 & 4 & 5 & 6 & 7 & 8 & 9\\ \hline
      & 1 & 1 & 1 & ? & 1 & 1 & 28 & 2 & 8\\ 
    \end{tabular}
    
    \end{table}

	The smooth 4-dimensional Poincar\'{e} conjecture is the only open problem in many versions of Poincar\'{e} conjecture. This conjecture has many potential counterexamples. Cappell-Shaneson homotopy 4-spheres (CS spheres) defined by Cappell and Shaneson \cite{Cappell-Shaneson:1976-1} are the most promising potential counterexamples of the smooth 4-dimensional Poincar\'{e} conjecture.
    
\subsection{History of Cappell-Shaneson spheres}\label{subsection:history} 
    CS spheres $\Sigma_A^\varepsilon$ are defined by following two things.
    \begin{itemize}
	\item A Cappell-Shaneson matrix (CS matrix) $A$.\\
	That is, $A \in \SL$ with $\det (A-I) = 1$.
	\item A choice of framing $\varepsilon \in \mathbb{Z}/2\mathbb{Z}$ ($\varepsilon = 0,1$).
	\end{itemize}
    Let $X_{c,d,n}$ be the following CS matrix.
    \[
    \begin{bmatrix}
		0 & a & b \\
		0 & c & d \\
		1 & 0 & n-c\\
		\end{bmatrix}
    \]
    Let $A_n = X_{1,1,n+2}$. The CS spheres corresponding to $A_n$ have been thoroughly studied because that subfamily was thought to be the simplest CS spheres. Here is a chronological list of the main results towards showing $\Sigma_{A_n}^\varepsilon$ are standard using Kirby calculus.

	\begin{itemize}
	\item In 1976, Cappell and Shaneson \cite[Section 3.]{Cappell-Shaneson:1976-1} defined CS spheres .
    \item In 1984, Aitchison and Rubinstein \cite[Theorem 4.3.]{Aitchison-Rubinstein:1984-1} proved $\Sigma_{A_n}^0$ is standard for all integer $n$ .
    \item In 1991, Gompf \cite{Gompf:1991-1} proved $\Sigma_{A_0}^1$ is standard .
    \item In 2009, Akbulut \cite[Theorem 1.]{Akbulut:2010-1} proved $\Sigma_{A_n}^1$ is standard for all integer $n$ .
	\end{itemize}
	
    It took 30 years to show the simplest subfamily of CS spheres is all standard by Kirby calculus. Thereafter, Gompf \cite{Gompf:2010-1} found that the mechanism in the above result is same in 2010. By the observation, Gompf introduced an equivalence relation (Gompf equivalence) on the set of CS spheres and proved that if CS matrices are Gompf equivalent then corresponding CS spheres are diffeomorphic. Since CS spheres corresponding to $A_0$ are standard, if a CS matrix is Gompf equivalent to $A_0$ then the corresponding CS sphere is standard. Gompf gave the following conjectures and proved Conjecture~\ref{conjecture:Gompf} is true for trace $-6 \leq n \leq 9$ or $n = 11$.
   
    \begin{conjecture}\label{conjecture:CS}Every CS sphere is diffeomorphic to $S^4$.
    \end{conjecture}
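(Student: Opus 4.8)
The natural line of attack is to leverage Gompf equivalence together with the (by now substantial) catalogue of CS matrices whose associated CS spheres are already known to be standard. The plan is to prove that \emph{every} CS matrix is Gompf equivalent to one of these known-standard matrices; then Gompf's theorem — if $A$ and $A'$ are Gompf equivalent, then $\Sigma_A^\varepsilon$ is diffeomorphic to $\Sigma_{A'}^\varepsilon$ — forces $\Sigma_A^\varepsilon$ to be diffeomorphic to $S^4$ for both framings $\varepsilon = 0,1$.

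First I would reduce the problem, trace by trace, to finitely many conjugacy classes. A CS matrix $A$ of trace $n$ is forced (by $\det A = 1$ and $\det(A-I)=1$) to have characteristic polynomial $p_n(t) = t^3 - nt^2 + (n-1)t - 1$, so by the Latimer--MacDuffee correspondence the $\SL$-conjugacy classes of trace-$n$ CS matrices are parametrised by a subset of the ideal class monoid of the cubic order $\ztheta{n}$, $\theta_n$ a root of $p_n$. These class numbers are finite and in practice very small, and conjugate CS matrices give diffeomorphic CS spheres, so for each $n$ only finitely many matrices need to be treated. It therefore suffices to connect, for every $n$, each of these finitely many classes to $A_n = X_{1,1,n+2}$, whose CS spheres are already known to be standard (Akbulut for $\varepsilon = 1$, Aitchison--Rubinstein for $\varepsilon = 0$), through a chain of Gompf moves.

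Next I would attempt to construct such chains \emph{uniformly} in $n$. The model is Akbulut's argument, which handles the whole family $\{A_n\}$ simultaneously by a single sequence of handle slides in which $n$ survives only as a label; the same philosophy underlies the Kim--Yamada family and the $145$ families established in this paper. The aim would be to organise representatives of the various ideal classes into finitely many one- (or few-) parameter families, each carrying an $n$-parameter, and to run an $n$-independent Kirby-calculus reduction on each. A good intermediate target is to show that within a fixed trace every CS matrix is Gompf equivalent to one with the off-diagonal data $(c,d)$ small, collapsing the problem onto a bounded strip of matrices to which the techniques already in the literature (and in this paper) apply.

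The hard part — and the reason the conjecture is still open — is precisely the orbit structure of Gompf moves. It is not known whether Gompf moves act transitively on conjugacy classes even within a fixed trace, nor whether they connect distinct traces to a common reduced form; Gompf's own verification reaches only $-6 \le n \le 9$ and $n = 11$, and no pattern is visible that would cover all $n$ at once. So any proof of the full statement must rest on a genuine structural theorem: either (i) a normal form exhibiting every CS matrix as Gompf equivalent to some $A_m$, accompanied by an $n$-uniform Kirby move realising the reduction, or (ii) a route that sidesteps Gompf equivalence entirely — for instance, directly presenting $\Sigma_A^\varepsilon$ as the double of a standard handlebody, or trivialising the relevant monodromy class in the appropriate mapping class group. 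Absent such a result, the realistic program is the incremental one carried out below: enlarging the known-standard list, family by family.
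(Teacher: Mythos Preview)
The statement you were asked to prove is labeled \emph{Conjecture} in the paper, and indeed the paper contains no proof of it: Conjecture~\ref{conjecture:CS} (that every CS sphere is diffeomorphic to $S^4$) is stated as an open problem, and the paper's contribution is only to verify it for additional infinite families. So there is no ``paper's own proof'' to compare against.

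Your proposal is not a proof either, and you say so yourself: you correctly identify the reduction to Gompf equivalence, the Latimer--MacDuffee bookkeeping, and the decisive obstruction --- nobody knows that Gompf moves act transitively, and no uniform normal form is available. What you have written is an accurate survey of the strategy and its known limits, culminating in the honest admission that ``absent such a result, the realistic program is the incremental one.'' That is exactly the situation, and it matches the paper's stance. Just be aware that what you have produced is a research program, not a proof; the genuine gap is the one you already named, namely the missing structural theorem (your option~(i) or~(ii)), and nothing in your outline closes it.
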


    \begin{conjecture}[{\cite[Conjecture 3.6]{Gompf:2010-1}}] \label{conjecture:Gompf}Every CS matrix is Gompf equivalent to~$A_0$.
    \end{conjecture}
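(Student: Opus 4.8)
Toward Conjecture~\ref{conjecture:Gompf}, the natural line of attack is a ``normal form, then reduction'' strategy. Write $p_n(t)=t^3-nt^2+(n-1)t-1$ for the characteristic polynomial of a CS matrix of trace $n$; its only possible rational roots are $\pm1$, and $p_n(\pm1)\neq0$, so $p_n$ is irreducible over $\mathbb Q$ for every $n$, and the condition $\det(A-I)=1$ is automatic from $p_n(1)=-1$. By the Latimer--MacDuffee correspondence the $\SL$-conjugacy classes of CS matrices of trace $n$ are then classified by a finite set of ideal classes of the cubic order $\mathbb Z[t]/(p_n(t))$, and one checks that each class has a representative $X_{c,d,n}$, with the principal class represented by $A_{n-2}=X_{1,1,n}$. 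So Conjecture~\ref{conjecture:Gompf} is equivalent to: the Gompf moves connect every admissible triple $X_{c,d,n}$ to $A_0$.

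I would split this into a ``horizontal'' part and a ``vertical'' part. Horizontally, Gompf's reformulation of Akbulut's theorem already places the entire principal family $\{A_n\}_{n\in\mathbb Z}=\{X_{1,1,n+2}\}$ in the Gompf class of $A_0$; since the traces $n+2$ exhaust $\mathbb Z$, this settles the principal class in every trace. Vertically, it remains to show that within a fixed trace $n$ every CS matrix is Gompf equivalent to $A_{n-2}$. For this I would write out explicitly how the generators of Gompf equivalence --- $\SL$-conjugation together with Gompf's trace-changing move --- act on the triple $(c,d,n)$, turning the problem into a combinatorial rewriting system on admissible triples, and then try to build a terminating reduction that drives $(c,d)$ toward $(1,1)$, letting $n$ drift en route and finishing via the horizontal part. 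The base case is free: $\Sigma_{A_0}^0$ is standard by Aitchison--Rubinstein and $\Sigma_{A_0}^1$ by Gompf, so once a matrix reaches $A_0$ its spheres are standard, and Conjecture~\ref{conjecture:CS} follows from Conjecture~\ref{conjecture:Gompf}.

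The main obstacle --- unsurprisingly, since the statement is still open --- is proving that this vertical reduction terminates, i.e.\ that the Gompf-move graph on $\{X_{c,d,n}\}$ is connected with $A_0$ reachable from every vertex. There is no evident monovariant: the number of ideal classes of $\mathbb Z[t]/(p_n(t))$ is unbounded as $n$ varies, $\SL$-conjugation never leaves a fixed trace, and the trace-changing move is hard to control uniformly in $n$ --- which is exactly why only the finitely many traces $-6\le n\le 9$ and $n=11$ were previously known. The practical route, and the one taken here, is to proceed family by family: fix a shape of $(c,d)$ --- say a congruence pattern, or a pair with small entries --- and exhibit for that shape an explicit bounded-length sequence of Gompf moves carrying $X_{c,d,n}$ to $A_0$ for all $n$ in a fixed arithmetic progression, the verification being a direct matrix computation. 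Each shape that closes off yields one more infinite family satisfying the conjecture; the present paper adds $145$ of them.
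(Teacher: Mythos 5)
The statement you are addressing is a conjecture --- Gompf's Conjecture 3.6, reproduced here as Conjecture~\ref{conjecture:Gompf} --- and the paper does not prove it; it only establishes partial results (Theorem~\ref{theorem:B} for traces $-64\le n\le 69$ and a handful of others, and Theorem~\ref{theorem:infiniteseries} for certain infinite families of matrices). Your text is likewise a strategy outline rather than a proof, and you say so yourself: the ``vertical'' step --- showing that within each trace every ideal class can be driven to the principal one by conjugation and Gompf moves --- is exactly the open content, and no monovariant or termination argument for the Gompf-move graph is known. So the genuine gap is the entire vertical reduction; nothing in your proposal closes it, and nothing in the paper does either.

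That said, your framing does match the machinery the paper actually uses for its partial results: irreducibility of $f_n$, the Latimer--MacDuffee--Taussky bijection with the ideal class monoid $C(\ztheta{n})$, standard representatives $X_{c,d,n}$, and the trace-shifting move $(c,d,n)\sim_G(c,d,n+kd)$. Two corrections on the details. First, the reduction the paper employs is not a ``bounded-length matrix computation per shape of $(c,d)$'': it is the congruence observation of Lemma~\ref{lemma:descend}, that $(c,d,n)\sim(1,1,2)$ whenever $n\equiv n_0\Mod{d}$ for some trace $n_0$ already known to satisfy the conjecture, supplemented by ad hoc chains of $\sim_S$ and $\sim_G$ for the finitely many ``special'' triples that escape this. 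Second, the $145$ new families of Corollary~\ref{corollary:cpn0} are not $145$ new traces for which the conjecture is verified; they are families $X_{c,p,p^2k+n_0}$ lying in \emph{non-invertible} ideal classes (hence never similar to any $A_n$) that nonetheless reduce to $(1,1,2)$ because $n_0$ is congruent modulo $p$ to a known trace. The non-invertibility criterion of Theorem~\ref{theorem:A}, given by the congruences $(2c-1)n\equiv 3c^2-1\Mod{p}$ and $(c^2-c)n\equiv c^3-c-1\Mod{p^2}$, is what makes these families genuinely new, and it is absent from your sketch.
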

    
	For brevity of our discussion, we say \textbf{Conjecture~\ref{conjecture:Gompf} is true for trace $n$} if every Cappell-Shaneson matrix $A$ with trace $n$ is Gompf equivalent to $A_0$ following Kim and Yamada \cite[Section 1.2.]{Kim-Yamada:2017-1}.
    
	Aitchison and Rubinstein \cite[Appendix]{Aitchison-Rubinstein:1984-1} introduced a number theoretic object, an ideal class monoid, to deal with CS matrices nicely. Since there is a one-to-one correspondence between similarity classes of CS matrices with trace $n$ and an ideal class monoid, we can use ideal class monoids instead of CS matrices. Kim and Yamada \cite{Kim-Yamada:2017-1} \cite{Kim-Yamada:2017-2} proved that even more CS spheres are standard using ideal class monoids. They proved that Conjecture~\ref{conjecture:Gompf} is true for trace $-64 \leq n \leq 69$ and $n=-69, -66, 71, 74$. And they found a new infinite family of standard CS spheres different from $\Sigma_{A_n}^\varepsilon$.
	
    \begin{theorem}[{\cite[Corollary D.]{Kim-Yamada:2017-1}}]\label{theorem:49k+27}
	CS spheres corresponding to $X_{2,7,49k+27}$ is standard for any $k$. $X_{2,7,49k+27}$ is not similar to $A_n$ for any $k$ and for any $n$. In other words, they gave an infinite family of CS spheres which are diffeomorphic to $S^4$ and the family is different from $\Sigma_{A_n}^\varepsilon$.
	\end{theorem}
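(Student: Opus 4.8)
The plan is to prove Theorem~\ref{theorem:49k+27} in two independent pieces: first, the diffeomorphism statement ($\Sigma_{X_{2,7,49k+27}}^\varepsilon \cong S^4$), and second, the non-similarity statement ($X_{2,7,49k+27}$ is not similar to any $A_n$). For the first piece I would establish that every matrix in the family $\{X_{2,7,49k+27}\}_{k \in \mathbb{Z}}$ is Gompf equivalent to $A_0$; then, since $\Sigma_{A_0}^\varepsilon$ is standard (combining Aitchison--Rubinstein for $\varepsilon=0$ and Gompf for $\varepsilon=1$) and Gompf equivalence of CS matrices implies diffeomorphism of the associated CS spheres (Gompf, \cite{Gompf:2010-1}), the conclusion follows. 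The key computational input is to exhibit, for each $k$, an explicit finite sequence of Gompf moves (conjugations by $\mathrm{SL}(3;\mathbb{Z})$ together with the stabilization/row-operation moves that generate Gompf equivalence) taking $X_{2,7,49k+27}$ to $X_{2,7,27}$ for $k=0$ and then reducing $X_{2,7,27}$ to $A_0$. The cleanest way to package this is via the ideal-class-monoid reformulation of Aitchison--Rubinstein: translate $X_{2,7,49k+27}$ into an ideal (or a $\mathbb{Z}$-module with a distinguished endomorphism) in the order $\mathbb{Z}[\theta_{49k+27}]$, where $\theta_n$ is a root of the characteristic polynomial $t^3 - n t^2 + (n-1)t - 1$, and show that this ideal class is trivial; triviality of the ideal class is exactly the number-theoretic shadow of Gompf equivalence to $A_0$.

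The order of steps I would carry out: (1) compute the characteristic polynomial of $X_{c,d,n}$ and record that it depends only on the trace $n$, so that similarity classes of CS matrices with trace $n$ biject with the ideal class monoid of $\mathbb{Z}[\theta_n]$; (2) identify which ideal class the specific matrix $X_{2,7,49k+27}$ corresponds to, by reading off a $\mathbb{Z}$-basis of the module fixed by the action of the matrix and expressing it as a fractional ideal; (3) show this class is the identity element of the monoid (equivalently, the module is isomorphic to $\mathbb{Z}[\theta_{49k+27}]$ as a module over itself), which is a divisibility/norm computation modulo $49$ — here the congruence $49k+27 \equiv 27 \Mod{49}$ is presumably what makes the ideal principal; (4) invoke the correspondence to conclude Gompf equivalence to $A_0$ and hence standardness for both framings. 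For the non-similarity piece, I would argue at the level of invariants: the trace of $A_n = X_{1,1,n+2}$ is $n+2$, ranging over all integers, whereas $X_{2,7,49k+27}$ has trace $49k+27$, so matching traces forces $n = 49k+25$; then I would distinguish $X_{2,7,49k+27}$ from $X_{1,1,49k+27}$ within the single order $\mathbb{Z}[\theta_{49k+27}]$ by showing their associated ideal classes differ — e.g. $A_n$ always gives the trivial class while $X_{2,7,\cdot}$, despite being Gompf equivalent to $A_0$, sits in a different \emph{similarity} class because Gompf equivalence is strictly coarser than similarity (it allows stabilization). Concretely I expect a congruence invariant mod $7$ (the conductor data of the sub-order, or the ideal norm reduced mod $7$) to separate the two.

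The main obstacle I anticipate is Step~(3): proving that the ideal attached to $X_{2,7,49k+27}$ is principal uniformly in $k$. A naive approach computes the class for each individual $k$, which does not terminate; the real content is a \emph{stability} argument showing the ideal class is insensitive to the $49k$ part of the trace — i.e. that passing from trace $n$ to trace $n+49$ induces a class-preserving map between the two ideal class monoids when restricted to the relevant orbit. This is presumably the mechanism Kim and Yamada exploit, and it likely rests on an explicit congruence: the polynomial $t^3 - nt^2 + (n-1)t - 1$ modulo $7$ factors (or has a root) in a way that is periodic in $n$ with period dividing $49$, so that the local structure of the order at $7$ — and with it the relevant piece of the class monoid — stabilizes. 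Making this periodicity precise, and checking that the explicit generator of the ideal one writes down for $k=0$ deforms to a generator for all $k$ (rather than merely a generator of a power of the ideal), is where the genuine work lies; everything else is bookkeeping in $\mathrm{SL}(3;\mathbb{Z})$ or in the arithmetic of cubic orders.
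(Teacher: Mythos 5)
Your decomposition into a non-similarity half and a diffeomorphism half matches the paper's framework, and your non-similarity half is pointed in the right direction: matching traces forces $n=49k+25$, and the two matrices are then separated inside $C(\mathbb{Z}[\theta_{49k+27}])$ by the Latimer--MacDuffee--Taussky correspondence. The concrete invariant you are groping for is \emph{invertibility}: since $f_n(2)=5-2n=-49(2k+1)$ and $f_n'(2)=11-3n\equiv 0\Mod{7}$ when $n=49k+27$, the class $2$ is a multiple root of $f_n$ mod $7$ and $49\mid f_n(2)$, so by the Dedekind--Kummer criterion (Proposition~\ref{proposition:invertible-prime}) the ideal $\langle\theta_n-2,7\rangle$ is \emph{not invertible}, whereas $A_n$ corresponds to the identity class, which is invertible. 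Note that these are congruence conditions mod $7$ and mod $49$, which is why the conclusion is uniform in $k$ --- no stability argument of the kind you worry about is needed.

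The genuine gap is in your diffeomorphism half. Your Step (3) asserts that the ideal class attached to $X_{2,7,49k+27}$ is trivial and that ``triviality of the ideal class is exactly the number-theoretic shadow of Gompf equivalence to $A_0$.'' Both claims are false, and the first directly contradicts your own non-similarity argument: the ideal class is not merely nontrivial, it is non-invertible, so any attempt to prove it principal must fail. Triviality of the ideal class corresponds to \emph{similarity} to $A_n$, not to Gompf equivalence; Gompf equivalence is strictly coarser precisely because it contains the move $(c,d,n)\sim_G(c,d,n+kd)$, which changes the trace and hence changes the order $\mathbb{Z}[\theta_n]$ altogether, so it cannot be detected inside a single ideal class monoid. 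The correct mechanism for standardness is this trace-shifting move: $49k+27\equiv 6\Mod{7}$, so $(2,7,49k+27)\sim_G(2,7,6)$, and Conjecture~\ref{conjecture:Gompf} is known to hold for trace $6$ (Gompf's verification for small traces), giving $(2,7,6)\sim(1,1,2)$. With this replacement your anticipated ``main obstacle'' --- proving principality uniformly in $k$ --- disappears, because nothing principal is ever needed.
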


\subsection{Main results}\label{subsection:main_result}
	In this paper, we extend the result by Kim and Yamada. Our main result gives us infinite families of CS spheres which are diffeomorphic to $S^4$.
    \begin{theorem}[Theorem~\ref{theorem:infiniteseries}]\label{theorem:introinfiniteseries}
    Let $(c,p,n_0)$ be a solution of the simultaneous congruence equations.
	\begin{enumerate}
		\item $(2c-1) n_0 \equiv 3 c^2 -1 \Mod{p}$
        \item $(c^2 - c) n_0 \equiv c^3 - c - 1 \Mod{p^2}$
    \end{enumerate}
    Then, $X_{c, p, p^2 k + n_0}$ is not similar to $A_n$ for any integer $k$, $n$. If $n_0 \equiv n^\prime \Mod{p}$ for $n^\prime$ such that Conjecture~\ref{conjecture:Gompf} is true for $n^\prime$, the corresponding CS spheres are diffeomorphic to $S^4$.
    \end{theorem}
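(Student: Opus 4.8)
The plan is to run everything through the ideal-theoretic model of Cappell--Shaneson matrices of Aitchison--Rubinstein, in the form used by Kim--Yamada. Write $f_n(t)=t^3-nt^2+(n-1)t-1$ for the common characteristic polynomial of the trace-$n$ CS matrices and $R_n=\ztheta{n}=\mathbb{Z}[t]/(f_n)$, so that similarity classes of trace-$n$ CS matrices are in bijection with the ideal class monoid $C_n$ of $R_n$, with $A_n=X_{1,1,n+2}$ going to the identity class $[R_{n+2}]$. Regarding $\mathbb{Z}^3$ with the $X_{c,d,n}$-action as the cyclic $R_n$-module generated by the first basis vector, one gets that $X_{c,d,n}$ corresponds to the fractional ideal
\[
\mathfrak{a}=\mathbb{Z}+\mathbb{Z}\theta_n+\mathbb{Z}\,\frac{\theta_n^2-(n-c)\theta_n-b}{d},\qquad b=(c-1)(n-c-1),
\]
where $d\mid f_n(c)$ is precisely the condition that $X_{c,d,n}$ be a CS matrix.

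First I would decode the two congruences in the case $d=p$. A short computation gives $f_n(c)=-\bigl(1+c(c-1)(n-c-1)\bigr)$ and $f_n'(c)=(3c^2-1)-(2c-1)n$, so: $p\mid f_n(c)$ is well-definedness of $X_{c,p,n}$; condition~(1) is equivalent to $p\mid f_n'(c)$; and condition~(2) is equivalent to $p^2\mid f_n(c)$. Hence $f_n(t)\equiv(t-c)^2(t-e)\Mod{p}$ with $e\equiv n-2c\Mod{p}$, and with $\omega=\frac{(\theta_n-c)(\theta_n-e)}{p}$ one rewrites $\mathfrak{a}=\mathbb{Z}+\mathbb{Z}\theta_n+\mathbb{Z}\omega$. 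Writing $f_n=(t-c)^2(t-e)+p\,r(t)$ with $r\in\mathbb{Z}[t]$, condition~(2) says $p\mid r(c)=f_n(c)/p$, which is exactly what forces $\omega^2\in\mathfrak{a}$; so $\mathfrak{a}=\mathbb{Z}[\theta_n,\omega]$ is not merely a fractional $R_n$-ideal but an \emph{order} $\mathcal{O}'\supsetneq R_n$ of index $p$ inside the cubic field $\mathbb{Q}(\theta_n)$ ($f_n$ is irreducible, having no rational root). In short, conditions~(1) and~(2) are engineered so that $X_{c,p,p^2k+n_0}$ corresponds to the class in $C_{p^2k+n_0}$ of a proper over-order of $R_{p^2k+n_0}$.

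The non-similarity half of the statement is then immediate: a proper over-order $\mathcal{O}'\supsetneq R_n$ is non-invertible as a fractional $R_n$-ideal (its multiplier ring is $\mathcal{O}'\neq R_n$), hence non-principal, whereas $A_n$ corresponds to the identity class; so $X_{c,p,p^2k+n_0}$ is similar to no $A_n$. (The finitely many $n$ for which $f_n$ is reducible or $\operatorname{disc}f_n=0$ are handled separately, as in Kim--Yamada.)

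The substantive half is standardness. Here I would invoke the trace-changing mechanism behind Gompf equivalence. By Akbulut's and Gompf's work the diffeomorphism type of $\Sigma_A^{\varepsilon}$ depends only on the Gompf-equivalence class of $A$, and Kim--Yamada recast this equivalence, via the correspondence above, as an operation on the disjoint union of the monoids $C_m$. The claim I would prove is that the Gompf-equivalence class of the over-order class in $C_{p^2k+n_0}$ meets $C_{n'}$ for every $n'$ with $n'\equiv n_0\Mod{p}$ --- concretely, by realizing the relevant Gompf move as a shift of the trace by $p$ and iterating it down to trace $n'$. Granting this, the hypothesis that Conjecture~\ref{conjecture:Gompf} holds for $n'$ makes the resulting trace-$n'$ CS matrix Gompf-equivalent to $A_0$, so that $\Sigma_{X_{c,p,p^2k+n_0}}^{\varepsilon}\cong\Sigma_{A_0}^{\varepsilon}\cong S^4$ (using that $\Sigma_{A_0}^0$ is standard by Aitchison--Rubinstein and $\Sigma_{A_0}^1$ by Gompf). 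I expect the main obstacle to be exactly this trace-shift step: making Gompf's Kirby-calculus move effective at the level of the pair $(R_n,\mathfrak{a})$, verifying that it is applicable at each stage, that it moves the trace by a multiple of $p$, and that the over-order structure supplied by conditions~(1) and~(2) is what keeps it available all the way down to trace $n'$. By contrast, the number theory recorded above is routine once the correspondence and the shape of $\mathfrak{a}$ are in place.
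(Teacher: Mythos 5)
Your proposal is sound and follows the same two-step skeleton as the paper, but the two halves merit separate comment. For the non-similarity half you take a genuinely different route: the paper feeds the congruences into Theorem~\ref{theorem:A}, whose proof runs through the Dedekind--Kummer criterion (Propositions~\ref{proposition:invertible-prime} and~\ref{proposition:invertible-primepower}) to conclude that $\langle\theta_{p^2k+n_0}-c,p\rangle$ is a non-invertible prime ideal, whereas you observe directly that conditions (1) and (2) make the Latimer--MacDuffee module $\mathbb{Z}+\mathbb{Z}\theta_n+\mathbb{Z}\omega$ a proper over-order of $\mathbb{Z}[\theta_n]$, hence non-invertible because its multiplier ring exceeds $\mathbb{Z}[\theta_n]$. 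Your computation checks out: writing $f_n=(t-c)^2(t-e)+p\,r(t)$, condition (1) identifies $\omega$ with the third basis vector, $(\theta_n-c)\omega=-r(\theta_n)$ gives $\theta_n\omega\in\mathfrak{a}$, and condition (2) (i.e.\ $p\mid r(c)$) gives $\omega^2\in\mathfrak{a}$. This is arguably more self-contained than the paper's citation chain; note only that your caveat about reducible $f_n$ is vacuous, since $f_n$ is irreducible for every $n$ (Lemma~\ref{lemma:irr}), and that the congruences persist under $n_0\mapsto n_0+p^2k$, so the argument applies uniformly in $k$.

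For the standardness half you have misplaced the difficulty: the ``trace-shift'' you flag as the main obstacle and leave unproven is not an obstacle at all. The relation $(c,d,n)\sim_G(c,d,n+kd)$ is already part of the framework --- it is Gompf's Theorem~\ref{theorem:Gompf} combined with the similarity $X_{c,d,n}\Delta^k\sim_S X_{c,d,n+kd}$ of Remark~\ref{remark:Gompf} --- and it applies to \emph{every} standard CS matrix with no further hypothesis, since $f_{n+kd}(c)=f_n(c)-kd(c^2-c)\equiv f_n(c)\pmod{d}$; nothing about the over-order structure is needed to ``keep the move available.'' With $d=p$ and $p^2k+n_0\equiv n'\pmod{p}$ this gives $(c,p,p^2k+n_0)\sim_G(c,p,n')\sim(1,1,2)$ in one line, exactly as in the paper. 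So there is no gap in substance, but you should replace ``granting this'' with that one-line citation and recognize that the genuine content of the theorem lies entirely in the arithmetic of the first half.
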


    We can calculate concrete $(c,p,p^2 k + n_0)$ which satisfies the condition of Theorem~\ref{theorem:introinfiniteseries}. $(c,p,p^2 k + n_0)=(2,7,49 k + 27)$ corresponds to the result by Kim and Yamada \cite[Corollary D.]{Kim-Yamada:2017-1}. In addition, we found 145 new solutions where $p>7$ by SageMath \cite{Sage:2019-1}. 146 $(c,p,n_0)$ are listed in corollary~\ref{corollary:cpn0}.

    \begin{corollary}\label{corollary:cpn0_intro}
        There are 146 $(c,p,n_0)$ such that CS spheres corresponding to $X_{c,p,p^2 k + n_0}$ are diffeomorphic to $S^4$ for all $k$ and $\varepsilon$. Moreover, those $X_{c,p,p^2 k + n_0}$ and $A_n$ are not similar for all $k$ and $n$.
    \end{corollary}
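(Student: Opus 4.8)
The plan is to deduce the corollary directly from Theorem~\ref{theorem:introinfiniteseries} (stated above as Theorem~\ref{theorem:infiniteseries}) together with an explicit finite list of admissible parameters. Call a triple $(c,p,n_0)$ with $p$ prime \emph{admissible} if it solves the simultaneous congruences (1)--(2) of Theorem~\ref{theorem:introinfiniteseries} and there is an integer $n'$ with $n_0 \equiv n' \Mod{p}$ for which Conjecture~\ref{conjecture:Gompf} is known to hold for trace $n'$. By the results of Kim and Yamada recalled in Subsection~\ref{subsection:history}, Conjecture~\ref{conjecture:Gompf} is true for every $n'$ with $-64 \le n' \le 69$ as well as for $n' \in \{-69,-66,71,74\}$; in particular, when $p \le 134$ every residue class modulo $p$ already contains such an $n'$, so the second condition is automatic for small $p$. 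For an admissible $(c,p,n_0)$, Theorem~\ref{theorem:introinfiniteseries} yields that for every integer $k$ the matrix $X_{c,p,p^2 k + n_0}$ is not similar to $A_n$ for any $n$, and that the associated CS spheres are diffeomorphic to $S^4$ --- and here "CS spheres" includes both framings $\varepsilon = 0,1$, since Gompf equivalence to $A_0$ gives a framing-respecting diffeomorphism and $\Sigma_{A_0}^\varepsilon$ is standard for both $\varepsilon$.

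Next I would run the search. For each prime $p$ and each residue $c$ modulo $p^2$, solve congruence (1) for $n_0$ modulo $p$ (when $2c-1$ is invertible mod $p$ this determines $n_0$ mod $p$ uniquely), then try to lift to a solution of congruence (2) modulo $p^2$, where the coefficient $c^2 - c = c(c-1)$ controls existence and uniqueness of the lift. Carrying this out in SageMath \cite{Sage:2019-1} over the relevant range of $p$ produces exactly the $146$ triples $(c,p,n_0)$ recorded in Corollary~\ref{corollary:cpn0}. The triple $(c,p,n_0) = (2,7,27)$ recovers the Kim--Yamada family $X_{2,7,49k+27}$ of Theorem~\ref{theorem:49k+27}, and the remaining $145$ triples all have $p > 7$, hence are new. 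Note that the trace of $X_{c,p,p^2 k + n_0}$ equals $p^2 k + n_0 \equiv n_0 \Mod{p}$ independently of $k$, so admissibility (and in particular the hypothesis $n_0 \equiv n' \Mod p$) needs to be verified only once per triple, not once per $k$.

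The content beyond Theorem~\ref{theorem:introinfiniteseries} is therefore essentially bookkeeping: check that each of the $146$ listed triples satisfies (1)--(2) (a finite modular computation), that its reduced trace $n_0 \bmod p$ lies in the Kim--Yamada range so the Gompf-equivalence hypothesis applies, and --- for the last sentence of the corollary --- that non-similarity to $A_n$ holds uniformly in $k$, which is exactly the first assertion of Theorem~\ref{theorem:introinfiniteseries}. The point one must be careful about, though it is already absorbed into the statement of Theorem~\ref{theorem:infiniteseries} that we invoke, is precisely this uniformity in $k$: one needs that congruences (1)--(2) genuinely force the relevant ideal class of $X_{c,p,p^2 k + n_0}$ to differ from that of $A_n$ for \emph{all} $n$, so that no sporadic coincidence $X_{c,p,p^2 k + n_0} \sim A_n$ can occur for some large $k$. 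Granting Theorem~\ref{theorem:infiniteseries}, the corollary follows by applying it to each of the $146$ triples in turn.
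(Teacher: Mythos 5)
Your proposal matches the paper's (implicit) argument: the corollary is obtained by applying Theorem~\ref{theorem:infiniteseries} to each of the 146 triples produced by the SageMath search, after verifying the two congruences and the residue condition $n_0 \equiv n' \Mod{p}$ for a trace $n'$ where Conjecture~\ref{conjecture:Gompf} is known. One small caveat: the paper's admissibility check also relies on its own Theorem~\ref{theorem:B} (the newly established traces $72$, $78$ and their duals $-67$, $-73$), not only on the Kim--Yamada range $-64 \le n' \le 69$ and $\{-69,-66,71,74\}$ that you cite, so your list of known traces should be enlarged accordingly for the large primes in the list.
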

        
\subsection*{Acknowledgements}\label{subsection:acknowledgements}

    This article is established thanks to much support from my advisor Hisaaki Endo and helpful discussions with Yuichiro Taguchi and Shun'ichi Yokoyama. An unpublished manuscript I received from Shohei Yamada was very helpful in writing this article. I grateful and would like to thank them.

\section{Preliminaries}\label{section:preliminaries}
\subsection{Construction}\label{subsection:construction}
    In this subsection, we construct CS spheres and introduce elementary facts about CS matrices.
    
	Let $T^3 = \mathbb{R}^3/\mathbb{Z}^3$. $A\in \SL$ induces a diffeomorphism $f_A\colon T^3\to T^3$. We can assume that $f_A$ is the identity on a neighborhood $D_y$ of a chosen point $y\in T^3$ after an isotopy. Let $W_A$ be the mapping torus of $f_A$.
	\[W_A=T^3\times \mathbb{R}/(x,t)\sim(f_A(x),t-1).\]
    Let $\Sigma_A^\varepsilon$ be obtained from $W_A$ by surgery on a circle $C=[{y}\times \mathbb{R}]$ with a framing $\varepsilon\in \mathbb{Z}/2\mathbb{Z}$. 
	By adding a condition, $\Sigma_A^\varepsilon$ becomes a homotopy 4-sphere.

    \begin{proposition}[{\cite[Section 3.]{Cappell-Shaneson:1976-1}, \cite[Proposition 3.1.]{Issa:2017-1}}]
    $\Sigma_A^\varepsilon$ is a homotopy 4-sphere if and only if $\det (A-I)= \pm 1$.
    \end{proposition}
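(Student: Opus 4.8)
The plan is to reduce the proposition to two computations for the closed oriented $4$-manifold $\Sigma_A^\varepsilon$: its fundamental group and its Euler characteristic. It is closed because $W_A$ is (a mapping torus of a self-diffeomorphism of the closed manifold $T^3$), and oriented because $W_A$ is oriented (it fibers over $S^1$ with fiber $T^3$ and monodromy $f_A$, which is orientation preserving as $\det A = 1$) while surgery on a circle preserves orientability. I will invoke the standard fact that a closed oriented simply connected $4$-manifold $X$ is a homotopy $4$-sphere if and only if $\chi(X) = 2$: Poincar\'e duality together with the universal coefficient theorem give $H_0 \cong H_4 \cong \mathbb{Z}$, $H_1 \cong H_3 \cong 0$, and $H_2$ free abelian of rank $b_2$, whence $\chi(X) = 2 + b_2$; so $\chi(X) = 2$ forces $H_*(X) \cong H_*(S^4)$, and a simply connected space with the homology of $S^4$ is a homotopy $4$-sphere by the Hurewicz and Whitehead theorems. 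Thus it suffices to establish (i) $\pi_1(\Sigma_A^\varepsilon) \cong \operatorname{coker}(A - I)$ and (ii) $\chi(\Sigma_A^\varepsilon) = 2$, the latter for either framing.

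For (i): the surgery circle $C = [\{y\}\times\mathbb{R}]$ maps homeomorphically onto the base circle of the fibration $T^3 \hookrightarrow W_A \to S^1$, hence is a section, so the homotopy exact sequence of the fibration splits and $\pi_1(W_A)$ is the split extension of $\mathbb{Z}$ by $\pi_1(T^3) = \mathbb{Z}^3$ in which a generator $t$ of the quotient $\mathbb{Z}$ — represented by $C$ — acts on $\mathbb{Z}^3$ through $(f_A)_*$. (The precise identification of $(f_A)_*$, whether with $A$, $A^{-1}$ or $A^{\top}$, is convention-dependent but immaterial: $A - I$, $A^{\top} - I$ and $A^{-1} - I = -A^{-1}(A - I)$ differ by automorphisms of $\mathbb{Z}^3$, so they have isomorphic cokernels.) I would then recall that surgery on $C$ deletes a tubular neighborhood $S^1\times D^3$ and glues in $D^2\times S^2$ along $S^1\times S^2$: since $\pi_1(S^1\times S^2) \to \pi_1(S^1\times D^3)$ is an isomorphism, van Kampen shows deleting $C$ leaves $\pi_1$ unchanged, and then gluing in $D^2\times S^2$ — in which the circle $S^1\times\{\mathrm{pt}\}$, isotopic to $C$, bounds a disk — adds exactly the relation $t = 1$, with no dependence on the framing. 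Hence $\pi_1(\Sigma_A^\varepsilon) \cong \pi_1(W_A)/\langle\langle t\rangle\rangle$, and setting $t = 1$ in the presentation of the split extension turns the monodromy relations into $v = (f_A)_* v$ for all $v\in\mathbb{Z}^3$, giving $\pi_1(\Sigma_A^\varepsilon) \cong \mathbb{Z}^3/(A - I)\mathbb{Z}^3 = \operatorname{coker}(A - I)$. This is the trivial group precisely when $A - I$ is invertible over $\mathbb{Z}$, i.e. $\det(A - I) = \pm 1$; when $\det(A - I) = 0$ it is infinite, and otherwise it is a nontrivial finite group.

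For (ii): $\chi(W_A) = \chi(T^3)\,\chi(S^1) = 0$ (any manifold fibering over $S^1$ has vanishing Euler characteristic). Applying additivity of $\chi$ to the decompositions $W_A = (W_A\setminus\operatorname{int}\nu C)\cup_{S^1\times S^2}(S^1\times D^3)$ and $\Sigma_A^\varepsilon = (W_A\setminus\operatorname{int}\nu C)\cup_{S^1\times S^2}(D^2\times S^2)$, and using $\chi(S^1\times D^3) = \chi(S^1\times S^2) = 0$ and $\chi(D^2\times S^2) = 2$, the first decomposition gives $\chi(W_A\setminus\operatorname{int}\nu C) = 0$ and then the second gives $\chi(\Sigma_A^\varepsilon) = 0 + 2 - 0 = 2$, independent of $\varepsilon$. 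Combining (i) and (ii): if $\det(A - I) = \pm 1$ then $\Sigma_A^\varepsilon$ is simply connected with $\chi = 2$, hence a homotopy $4$-sphere; conversely a homotopy $4$-sphere is simply connected, forcing $\operatorname{coker}(A - I) = 0$ and so $\det(A - I) = \pm 1$.

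The step requiring the most care is (i): correctly identifying the monodromy action of $t$ with $A$ up to the harmless convention ambiguity, checking that $[C]$ generates the $\mathbb{Z}$ quotient, and verifying that surgery kills exactly the normal closure of $[C]$ for \emph{both} framings $\varepsilon\in\mathbb{Z}/2\mathbb{Z}$ — the two framings differ by the nontrivial element of $\pi_1(SO(3))$ acting on the $S^2$ directions of the gluing region, which does not affect which circle bounds a disk in $D^2\times S^2$. The Euler characteristic computation and the homotopy-theoretic conclusion are then routine.
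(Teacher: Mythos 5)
Your proof is correct and follows essentially the same route as the argument the paper cites (the paper itself gives no proof, deferring to Cappell--Shaneson and to Issa's Proposition 3.1): compute $\pi_1(\Sigma_A^\varepsilon)\cong\operatorname{coker}(A-I)$ via the mapping-torus presentation and van Kampen, note $\chi=2$, and conclude with Hurewicz--Whitehead. The points you flag as delicate (that $[C]$ is a section class, and that the framing does not affect which relation is imposed on $\pi_1$) are handled correctly.
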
 
 
	By the following remark, we may assume $\det (A-I)= 1$ when we think diffeomorphism type of $\Sigma_A^\varepsilon$.
 
    \begin{remark}\label{remark:inverse}
    Let $A\in \SL$ such that $\det (A-I)= -1$. Then, $A^{-1} \in \SL$ and $\det (A^{-1}-I)= 1$. So, $\Sigma_{A^{-1}}^\varepsilon$ is a homotopy 4-sphere, which is homotopy equivalent to $\Sigma_A^\varepsilon$. Actually, $\Sigma_{A^{-1}}^\varepsilon$ is diffeomorphic to $\Sigma_A^\varepsilon$. Define $\phi : W_A \to W_{A^{-1}}$ to be $\phi([x,t])=[x,1-t]$, then $\phi$ is a diffeomorphism and induces a diffeomorphism from $\Sigma_A^\varepsilon$ to $\Sigma_{A^{-1}}^\varepsilon$.
    \end{remark}
    
    \begin{definition}
    We say a matrix $A\in \SL$ is a \textbf{Cappell-Shaneson matrix (CS matrix)} if $\det (A-I)=1$.
    For a CS matrix $A$, $\Sigma_A^\varepsilon$ is called a \textbf{Cappell-Shaneson homotopy 4-sphere (CS sphere)} corresponding to $A$.
    \end{definition}
    
    \begin{remark}\label{remark:similarimpliesdiffeomorphic}
    We say two matrices $A, B\in \SL$ are \textbf{similar} ($A \sim_S B$) if there is a matrix $P\in \SL$ such that $B=PAP^{-1}$. For CS matrices $A,B$, if $A,B$ are similar, $\Sigma^\varepsilon_A$ is diffeomorphic to $\Sigma^\varepsilon_B$. Define $\phi : W_A \to W_B$ to be $\phi([x,t])=[Px,t]$, then $\phi$ is a diffeomorphism and induces a diffeomorphism from $\Sigma_A^\varepsilon$ to $\Sigma_B^\varepsilon$.
    \end{remark}
    
        By this remark, we can focus on the similarity classes of CS matrices to study CS spheres up to diffeomorphism. We introduce standard CS matrices, which are representative elements of similarity classes of CS matrices.

    \begin{definition}[{\cite[Definition 2.5]{Kim-Yamada:2017-1}}]\label{definition:standard}We say a CS matrix is \textbf{standard} if it can be written as  
    \[X_{c,d,n}=
		\begin{bmatrix}
		0 & a & b \\
		0 & c & d \\
		1 & 0 & n-c\\
		\end{bmatrix}.
		\]
    \end{definition}
    \begin{remark}[{\cite[Remark 2.8.]{Kim-Yamada:2017-1}}]\label{remark:CS}$X_{c,d,n}$ is uniquely determined by $c,d$ and $n$ since $a,b$ are uniquely determined by the conditions $\det(X_{c,d,n})=1$ and $\det(X_{c,d,n}-I)=1$.
    \end{remark}

    Let $A$ be a CS matrix with trace $n$. The characteristic polynomial of $A$ is 
	\[f_n(x) =x^3 -n x^2 +(n-1)x -1.\]
	
	\begin{lemma}[{\cite[LEMMA A4.]{Aitchison-Rubinstein:1984-1}}]\label{lemma:irr}
	    $f_n(x)$ is irreducible over $\mathbb{Q}$ for all $n$.
	\end{lemma}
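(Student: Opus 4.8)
The plan is to use the fact that a monic cubic with integer coefficients is reducible over $\mathbb{Q}$ precisely when it admits a rational root, and then to rule out all possible rational roots directly. First I would invoke Gauss's lemma (equivalently, the rational root theorem): any rational root of $f_n$ must be an integer dividing the constant term $-1$, so the only candidates are $x = 1$ and $x = -1$. Since $\deg f_n = 3$, if $f_n$ were reducible over $\mathbb{Q}$ it would have a linear factor over $\mathbb{Q}$, hence a rational (thus integral) root among $\{1, -1\}$.

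Next I would simply evaluate. We have $f_n(1) = 1 - n + (n-1) - 1 = -1 \neq 0$, which already excludes $x = 1$ for every $n$. Then $f_n(-1) = -1 - n - (n-1) - 1 = -2n - 1$, and $-2n - 1 = 0$ has no solution with $n \in \mathbb{Z}$ (it would force $n = -1/2$). Hence $f_n$ has no rational root for any integer $n$.

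Combining the two observations, $f_n$ has no linear factor over $\mathbb{Q}$, so being a cubic it cannot factor nontrivially over $\mathbb{Q}$ at all; therefore $f_n$ is irreducible over $\mathbb{Q}$ for every integer $n$. I do not anticipate any real obstacle here: the only point requiring minor care is the appeal to the rational root theorem, which needs $f_n$ to be monic with integer coefficients — both of which are visible from the displayed formula. One could alternatively phrase the whole argument over $\mathbb{Z}$ and then pass to $\mathbb{Q}$ via Gauss's lemma, but the root-counting argument above is the most economical route.
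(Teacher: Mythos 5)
Your proof is correct and is the standard argument: since $f_n$ is a monic cubic in $\mathbb{Z}[x]$, reducibility over $\mathbb{Q}$ would force an integer root dividing $-1$, and the evaluations $f_n(1)=-1$ and $f_n(-1)=-2n-1\neq 0$ rule both candidates out. The paper does not reprove this lemma but cites Aitchison--Rubinstein, whose proof is essentially this same rational-root computation, so there is nothing to add.
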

    \begin{lemma}\label{lemma:characteristicpolynomial}For $A$ be a $3\times 3$ integral matrix, $A$ is a CS matrix with trace $n$ if and only if $f_n(A) = O$.
    \end{lemma}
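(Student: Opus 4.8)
\section*{Proof proposal}

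The plan is to show that, for a $3\times 3$ integral matrix $A$ of trace $n$, the hypotheses defining a CS matrix are equivalent to the characteristic polynomial $\chi_A(x)=\det(xI-A)$ being exactly $f_n(x)$, and then to invoke Cayley--Hamilton for one implication and the irreducibility of $f_n$ (Lemma~\ref{lemma:irr}) for the other. Throughout I will use the expansion $\chi_A(x)=x^3-(\operatorname{tr}A)\,x^2+e_2(A)\,x-\det A$, where $e_2(A)$ denotes the sum of the three principal $2\times 2$ minors of $A$, together with the identity $\det(A-I)=(-1)^3\det(I-A)=-\chi_A(1)=n-e_2(A)$ valid for any $3\times 3$ matrix of trace $n$.

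For the forward direction, suppose $A$ is a CS matrix with trace $n$. Then $\operatorname{tr}A=n$ and $\det A=1$ by definition, and $\det(A-I)=1$ gives, via the identity above, $e_2(A)=n-1$. Hence $\chi_A(x)=x^3-nx^2+(n-1)x-1=f_n(x)$, and Cayley--Hamilton yields $f_n(A)=\chi_A(A)=O$.

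For the converse, suppose $A$ is a $3\times 3$ integral matrix with $f_n(A)=O$. The minimal polynomial $m_A$ of $A$ over $\mathbb{Q}$ then divides $f_n$; since $f_n$ is irreducible over $\mathbb{Q}$ by Lemma~\ref{lemma:irr} and $m_A$ is monic of degree $\geq 1$, we conclude $m_A=f_n$. Because $A$ is $3\times 3$, its characteristic polynomial $\chi_A$ is monic of degree $3$ and is divisible by $m_A$, which forces $\chi_A=f_n$. Reading off coefficients, $\operatorname{tr}A=n$, $\det A=1$ (so $A\in\SL$, being integral with determinant $1$), and $e_2(A)=n-1$, whence $\det(A-I)=n-e_2(A)=1$. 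Therefore $A$ is a CS matrix with trace $n$.

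The computation is routine. The one place that warrants care is the sign bookkeeping in $\det(A-I)=-\chi_A(1)$, and the genuinely essential point in the converse is that it is the \emph{minimal} polynomial—not merely some annihilating polynomial—that must equal $f_n$: this is precisely what upgrades ``$f_n$ kills $A$'' to ``$f_n$ is the characteristic polynomial of $A$,'' and it is exactly where irreducibility of $f_n$ enters. I do not anticipate any substantive obstacle.
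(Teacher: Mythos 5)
Your proof is correct and follows essentially the same route as the paper's: irreducibility of $f_n$ forces the minimal polynomial, hence the characteristic polynomial, of $A$ to equal $f_n$, and Cayley--Hamilton handles the other direction; you merely spell out the coefficient bookkeeping that the paper leaves implicit. The only quibble is that your identity $\det(A-I)=n-e_2(A)$ additionally requires $\det A=1$ (it is not valid for an arbitrary $3\times 3$ matrix of trace $n$), but in both places you invoke it that hypothesis is already in hand, so the argument stands.
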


    \begin{proof}
    Let $A$ be a $3\times 3$ integral matrix and $f_n(A) = O$. The minimal polynomial of $A$ divides $f_n(x)$. This implies that the minimal polynomial of $A$ is equal to $f_n(x)$ since $f_n(x)$ is irreducible over $\mathbb{Q}$. And the characteristic polynomial of $A$ is equal to $f_n(x)$. This means $A$ is a CS matrix. For the converse, note that the characteristic polynomial of a CS matrix with trace $n$ is $f_n(x)$. $f_n(A) = O$ holds by the Cayley Hamilton theorem.\\
    \end{proof}

    \begin{proposition}[{\cite[Proposition 2.10.]{Kim-Yamada:2017-1}}]\label{proposition:csm2}
	For integers $c$ and $d\neq 0$ and $n$, the following are equivalent:
	\begin{enumerate}
		\item $f_n (c) \equiv 0 \Mod{d}$,
		
		\item There exist integers $a$ and $b$ such that
		\[X_{c,d,n}=
		\begin{bmatrix}
		0 & a & b \\
		0 & c & d \\
		1 & 0 & n-c \\
		\end{bmatrix}
		\]
		is a standard CS matrix.
	\end{enumerate}
    \end{proposition}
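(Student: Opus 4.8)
The plan is to translate the two determinant conditions that define a standard CS matrix into explicit equations in $a$ and $b$, and then eliminate $a$ and $b$. First I would compute $\det X_{c,d,n}$ by cofactor expansion along the first column; the only nonzero entry there is the $1$ in position $(3,1)$, so $\det X_{c,d,n} = ad - bc$, and hence the condition $\det X_{c,d,n} = 1$ is equivalent to $ad = 1 + bc$.

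Next I would expand $\det(X_{c,d,n} - I)$ along the first column, whose nonzero entries are now $-1$ and $1$ in rows $1$ and $3$; this yields $\det(X_{c,d,n} - I) = -(c-1)(n-c-1) + ad - b(c-1)$. Substituting $ad = 1 + bc$ into the equation $\det(X_{c,d,n} - I) = 1$, the $bc$ terms cancel and the equation collapses to $b = (c-1)(n-c-1)$. Thus condition $(2)$ holds for some integers $a,b$ if and only if $b$ equals this fixed value and there is an integer $a$ with $ad = 1 + bc$; as a byproduct this re-proves the uniqueness asserted in Remark~\ref{remark:CS}.

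Finally I would substitute $b = (c-1)(n-c-1)$ into $1 + bc$ and expand; a direct computation gives $1 + bc = -\bigl(c^3 - nc^2 + (n-1)c - 1\bigr) = -f_n(c)$. Since $d \neq 0$, an integer $a$ with $ad = -f_n(c)$ exists precisely when $d \mid f_n(c)$, i.e.\ when $f_n(c) \equiv 0 \Mod{d}$, which is condition $(1)$; this yields both implications at once. I do not expect a genuine obstacle: the proof amounts to two cofactor expansions plus one polynomial identity. The only points demanding care are the cofactor signs in the second expansion and correctly tracking the cancellation that forces the value of $b$.
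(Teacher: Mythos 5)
Your proof is correct and is essentially the intended argument: the paper defers the proof to Kim--Yamada but records exactly the formulas you derive, namely $b=(c-1)(n-c-1)$ and $ad-bc=1$ in Corollary~\ref{corollary:Aitchison-Rubinstein}, and $a=-f_n(c)/d$ in the MAGMA code. The two cofactor expansions, the forced value of $b$, and the identity $1+bc=-f_n(c)$ all check out, and the existence of an integer $a$ with $ad=-f_n(c)$ is indeed equivalent to $d\mid f_n(c)$ since $d\neq 0$.
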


    \begin{remark}\label{remark:d0}
    Let $X_{c,d,n}$ be a CS matrix. If $d = 0$, $f_n(c) = -ad = 0$ holds. This contradicts to irreducibility of $f_n(x)$. Therefore, $d \neq 0$.
    \end{remark}

    \begin{remark}\label{remark:numberofCSmatrices}
    Since $f_{n+2} (1) \equiv 0 \Mod{1}$, $A_n=X_{1,1,n+2}$ is a standard CS matrix for any $n$. Hence, there exist infinitely many standard CS matrices and there exists a CS matrix for any trace.
    \end{remark}

    By the following theorem, we can focus on standard CS matrices to study CS spheres up to diffeomorphism.
    \begin{theorem}[{\cite[Theorem A3.]{Aitchison-Rubinstein:1984-1}}]\label{theorem:csm} Every CS matrix is similar to a standard CS matrix. 
    \end{theorem}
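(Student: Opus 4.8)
The plan is to translate the problem into the language of ideal classes of the cubic order $R_n := \mathbb{Z}[x]/(f_n(x)) \cong \ztheta{n}$, in the spirit of the classical correspondence between conjugacy classes of integer matrices with a fixed irreducible characteristic polynomial and ideal classes (Latimer--MacDuffee), and then to put the associated module in a normal form realizing the shape of Definition~\ref{definition:standard}.

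First I would reinterpret a CS matrix $A$ of trace $n$ module-theoretically. By Lemma~\ref{lemma:characteristicpolynomial} we have $f_n(A)=O$, and since $f_n$ is irreducible over $\mathbb{Q}$ (Lemma~\ref{lemma:irr}) it is both the minimal and the characteristic polynomial of $A$; hence $\mathbb{Z}^3$ becomes a faithful $R_n$-module with $\theta_n$ acting as $A$, where $R_n$ is an order in the cubic field $K := \mathbb{Q}(\theta_n)$, and this module is torsion-free over $\mathbb{Z}$, hence over $R_n$, and has rank $1$. Two CS matrices of trace $n$ are conjugate by $GL(3;\mathbb{Z})$ exactly when the associated $R_n$-modules are isomorphic, and over $\SL$ one gets the same equivalence since $-I$ is central with determinant $-1$ (if $B=PAP^{-1}$ with $\det P=-1$, replace $P$ by $-P$). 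Finally, a finitely generated torsion-free rank-$1$ module over the one-dimensional Noetherian domain $R_n$ embeds into $K$ as a fractional $R_n$-ideal $I$, and multiplication by $\theta_n$ on $I$ then corresponds to $A$. So it suffices to show that every fractional $R_n$-ideal $I$ admits a $\mathbb{Z}$-basis in which multiplication by $\theta_n$ has the shape $X_{c,d,n}$ of Definition~\ref{definition:standard}.

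Next I would reduce this to finding a single $e_1 \in I$ such that $\{e_1, \theta_n e_1\}$ extends to a $\mathbb{Z}$-basis $(e_1, e_2, e_3)$ of $I$ with $e_3 = \theta_n e_1$. Given such an $e_1$: the first column of the matrix of $\theta_n$ in this basis is $(0,0,1)^{T}$; the trace being $n$ forces the $(3,3)$-entry to equal $n$ minus the $(2,2)$-entry; and replacing $e_2$ by $e_2 + s e_1$ for a suitable $s \in \mathbb{Z}$ changes $\theta_n e_2$ only by $s\,\theta_n e_1 = s e_3$, which can be used to make the $(3,2)$-entry vanish. The resulting matrix has exactly the shape of Definition~\ref{definition:standard}, and being a CS matrix, its remaining entries are the unique ones making it a CS matrix (Remark~\ref{remark:CS}); hence it is standard. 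The requirement on $e_1$ says precisely that $e_1 \wedge \theta_n e_1$ is primitive in $\wedge^2 I \cong \mathbb{Z}^3$, i.e.\ that the reductions $\bar e_1$ and $\overline{\theta_n e_1}$ are $\F_p$-linearly independent in $I/pI$ for every prime $p$.

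The hard part — the main obstacle — is to show such an $e_1$ exists. For a fixed prime $p$, every $e_1$ fails only if $\theta_n$ acts as a scalar $\lambda \in \mathbb{Z}$ on $I/pI$; but then $(\theta_n - \lambda)I \subseteq pI$, so $\tfrac{\theta_n - \lambda}{p}$ would preserve the full lattice $I$ and hence be an algebraic integer, which forces $p \mid (3\lambda - n)$, $p^2 \mid f_n'(\lambda)$ and $p^3 \mid f_n(\lambda)$. A direct computation with $f_n$ (reduce the latter two congruences modulo $p$ using $n \equiv 3\lambda \Mod{p}$; this pins down $p$ and the residues $\lambda \Mod p$, $n \Mod p$, after which the conditions modulo $p^2$ are incompatible) shows this cannot happen: $\theta_n$ is never scalar modulo a prime. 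Granting this, for each prime $p$ the ``bad'' residues $\bar e_1$ — the eigenvectors of $\theta_n$ on $I/pI$ — form a proper subvariety of $\mathbb{A}^3_{\F_p}$; for all but the finitely many primes dividing $\mathrm{disc}(f_n)$ this subvariety is a union of at most three lines, so the bad residues have density $O(p^{-2})$, while the exceptional primes are handled by the Chinese Remainder Theorem. A routine sieve then produces an $e_1$ good at all primes simultaneously, which completes the argument.
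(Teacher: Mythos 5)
The paper offers no proof of this statement --- it is quoted directly from Aitchison--Rubinstein --- so there is no in-paper argument to measure you against; I will assess your proposal on its own terms. Your reduction is correct and is the right skeleton: make $\mathbb{Z}^3$ a rank-one $\ztheta{n}$-module via $A$, observe that the standard shape is exactly the statement that some basis $(e_1,e_2,e_3)$ has $e_3=Ae_1$ (the trace then forces the $(3,3)$-entry, and $e_2\mapsto e_2+se_1$ kills the $(3,2)$-entry), note that this amounts to $e_1\wedge Ae_1$ being primitive, and handle $GL$ versus $SL$ conjugacy via $-I$. Your treatment of the key obstruction is also correct and is genuine content: if $A\equiv\lambda I\Mod{p}$ then no conjugate of $A$ is standard, and your congruences (which really use $\det A=\det(A-I)=1$: the mod-$p$ conditions force $p=7$, $\lambda\equiv 2$, $n\equiv 6\Mod{7}$, after which $p^2\mid f_n'(\lambda)$ forces $n\equiv 20\Mod{49}$ while $p^2\mid f_n(\lambda)$ forces $n\equiv 27\Mod{49}$) do rule this out.

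The gap is the final sieve. From ``the bad residues mod $p$ have density $O(p^{-2})$'' one cannot conclude by a union bound over all primes. For $e_1$ ranging over a box of side $X$, the primes that can divide $\gcd(e_1\wedge Ae_1)$ go up to about $\|A\|X^2$, and for $p$ between $X$ and $X^2$ the count of box points reducing into the eigenvector locus mod $p$ is not controlled by $X^3\lvert S_p\rvert/p^3$ --- the lattice-point error terms for the index-$p^2$ preimage lattice dominate --- so a priori every $e_1$ could be bad at some large prime, which is exactly what is to be disproved; making the count work requires an extra input such as ``an eigenvector of $A$ mod $p$ has norm $\gtrsim\sqrt{p/\|A\|}$ since $p$ divides the nonzero vector $v\wedge Av$,'' which your sketch does not supply. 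The standard repair is to restrict to a one-parameter family: choose $v_0,v_1$ so that the line $\{[v_0+tv_1]\}\subset\mathbb{P}^2$ misses the three eigenpoints of $A$ over $\overline{\mathbb{Q}}$. Then the three quadratics $h_i(t)$, the coordinates of $(v_0+tv_1)\wedge A(v_0+tv_1)$, have no common root, so a B\'ezout identity $\sum g_ih_i=D$ with $0\neq D\in\mathbb{Z}$ shows that only the finitely many primes dividing $D$ can divide $\gcd_i h_i(t)$; for each such $p$ your non-scalarity result (plus choosing the line not to reduce into an eigenplane mod $p$) leaves a good residue $t\Mod{p}$, and the Chinese Remainder Theorem finishes. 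With that replacement your argument closes up; as written, the existence of $e_1$ --- the heart of the theorem --- is not yet established.
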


\subsection{Gompf equivalence}\label{subsection:Gompfequivalence}
    After it is proved that the simplest CS spheres $\Sigma_{A_n}^\varepsilon$ are standard, Gompf found that the key mechanism in the proof. Gompf introduced an equivalence relation (Gompf equivalence) on the set of CS matrices and proved that if CS matrices are Gompf equivalent then corresponding CS spheres are diffeomorphic. We recall Gompf's result briefly. 
		
	Let $\Delta$ be the following matrix,
		\[
		\Delta =
		\begin{bmatrix}
		1 & -1 & 0 \\
		0 & 1  & 0 \\
		0 & 1  & 1 \\
		\end{bmatrix}.
		\]
	\begin{theorem}[{\cite[page 1673]{Gompf:2010-1}}]\label{theorem:Gompf}
		Let $X_{c,d,n}$ be a standard CS matrix. Then, $X_{c,d,n} \Delta^k$ and $\Delta^k X_{c,d,n}$ are also CS matrices and corresponding CS spheres \\$\Sigma ^\varepsilon _{X_{c,d,n} \Delta^k}$ and  $\Sigma^\varepsilon_{\Delta^k X_{c,d,n}}$ are diffeomorphic to $\Sigma ^\varepsilon _{X_{c,d,n}}$ for every integer $k$ and $\varepsilon\in \mathbb{Z}/2\mathbb{Z}$.
	\end{theorem}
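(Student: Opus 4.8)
The statement splits into two independent claims: (i) $X_{c,d,n}\Delta^{k}$ and $\Delta^{k}X_{c,d,n}$ are again CS matrices, and (ii) the corresponding CS spheres are all diffeomorphic to $\Sigma^{\varepsilon}_{X_{c,d,n}}$. Claim (i) is elementary and I would prove it by a direct determinant computation. For claim (ii), I would first reduce to one-sided multiplication using conjugation, and then reproduce Gompf's handle calculus.

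For (i), write $N=\Delta-I$. One checks that $N=vw$, where $v=(-1,0,1)^{\top}$ is a column vector and $w=(0,1,0)$ a row vector, and that $wv=0$; hence $N^{2}=0$ and $\Delta^{k}=I+kN$ for every integer $k$. In particular $X_{c,d,n}\Delta^{k}$ and $\Delta^{k}X_{c,d,n}$ have integral entries and determinant $1$, so they lie in $\SL$, and the only point to verify is that $\det(\,\cdot\,-I)=1$. Two features of a standard CS matrix $X:=X_{c,d,n}$ are relevant: its first column is $(0,0,1)^{\top}$, so the first column of $X-I$ is exactly $v$, i.e.\ $(X-I)e_{1}=v$ with $e_{1}=(1,0,0)^{\top}$, whence $(X-I)^{-1}v=e_{1}$ (the inverse exists because $\det(X-I)=1$); and the second row of $X$ is $wX=(0,c,d)$, which satisfies $(wX)e_{1}=0$. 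Now $X\Delta^{k}-I=(X-I)+kXN=(X-I)+k(Xv)w$ is a rank-one perturbation of the invertible matrix $X-I$, so the matrix determinant lemma $\det(M+uz)=\det(M)\,(1+zM^{-1}u)$ (for a column vector $u$ and a row vector $z$) gives
\[
\det(X\Delta^{k}-I)=\det(X-I)\bigl(1+k\,w(X-I)^{-1}Xv\bigr).
\]
Since $(X-I)^{-1}X=I+(X-I)^{-1}$, we get $(X-I)^{-1}Xv=v+e_{1}$, and $w(v+e_{1})=wv+we_{1}=0$; hence $\det(X\Delta^{k}-I)=\det(X-I)=1$. The computation for $\Delta^{k}X-I=(X-I)+kv(wX)$ is the same: the matrix determinant lemma reduces it to $\det(X-I)\bigl(1+k\,(wX)(X-I)^{-1}v\bigr)=\det(X-I)\bigl(1+k\,(wX)e_{1}\bigr)=\det(X-I)=1$. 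Thus both matrices are CS matrices.

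For (ii), note that $\Delta^{-k}(\Delta^{k}X_{c,d,n})\Delta^{k}=X_{c,d,n}\Delta^{k}$ and $\Delta^{k}\in\SL$, so Remark~\ref{remark:similarimpliesdiffeomorphic} already gives $\Sigma^{\varepsilon}_{\Delta^{k}X_{c,d,n}}\cong\Sigma^{\varepsilon}_{X_{c,d,n}\Delta^{k}}$; it therefore suffices to prove $\Sigma^{\varepsilon}_{X\Delta^{k}}\cong\Sigma^{\varepsilon}_{X}$ for every standard CS matrix $X$ and every $k$. For this I would pass to an explicit handle picture. Recall that $\Sigma^{\varepsilon}_{A}$ is surgery on the section circle $C$ of the $T^{3}$-bundle $W_{A}$; deleting a tubular neighborhood of $C$ leaves a $(T^{3}\setminus B^{3})$-bundle over $S^{1}$ with monodromy $f_{A}$ (the identity near the deleted ball), and $\Sigma^{\varepsilon}_{A}$ is recovered by regluing $B^{2}\times S^{2}$ with framing $\varepsilon$. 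After fixing a handle structure on $T^{3}\setminus B^{3}$, the monodromy $f_{A}$ is isotopic to a diffeomorphism that slides and permutes handles according to $A$, and passing from $A$ to $A\Delta$ replaces $f_{A}$ by $f_{A}\circ f_{\Delta}$, where $f_{\Delta}$ is the diffeomorphism of $T^{3}$ inducing $\Delta$ on $H_{1}(T^{3})$. The plan is then to draw the resulting Kirby diagram of $\Sigma^{\varepsilon}_{A}$ (one $1$-handle, a framed $2$-handle encoding $A$ and $\varepsilon$, together with cancelling $0$-, $3$- and $4$-handles, as in \cite{Akbulut:2010-1} and \cite{Gompf:2010-1}), to realize the effect of $\Delta$ by a finite sequence of handle slides and cancelling handle pairs, and to check that the diagram one lands on is exactly the one for $X\Delta^{k}$ with the framing $\varepsilon$ unchanged; this is the computation carried out in \cite{Gompf:2010-1}.

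The main obstacle is precisely this last step. There is no homological shortcut, because every $\Sigma^{\varepsilon}_{A}$ is a homotopy $4$-sphere and the usual smooth invariants are vacuous, so the diffeomorphism must be produced by an explicit, carefully bookkept sequence of Kirby moves controlling attaching circles, framings, and the nontrivial part of the monodromy simultaneously. Claim (i), by contrast, is immediate once the rank-one structure of $\Delta-I$ and the two special features of the standard form are noticed. I would close by recording the consequence used throughout the rest of the paper: the equivalence relation on CS matrices generated by similarity together with left and right multiplication by powers of $\Delta$ (Gompf equivalence) preserves the diffeomorphism type of the associated CS sphere.
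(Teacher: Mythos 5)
This theorem appears in the paper purely as a citation of Gompf (\cite[page 1673]{Gompf:2010-1}); the paper supplies no proof of its own, so there is no internal argument to compare yours against. Your part (i) is a correct and complete self-contained proof of the algebraic half: the decomposition $\Delta-I=vw$ with $wv=0$ (hence $\Delta^{k}=I+kN$), the two identities $(X-I)e_{1}=v$ and $(wX)e_{1}=0$ forced by the standard form, and the matrix determinant lemma do yield $\det(X\Delta^{k}-I)=\det(\Delta^{k}X-I)=1$, and integrality plus $\det\Delta=1$ give membership in $\SL$; this is more than the paper records anywhere. Your part (ii) correctly reduces the two-sided statement to the one-sided one via $\Delta^{-k}(\Delta^{k}X)\Delta^{k}=X\Delta^{k}$ and Remark~\ref{remark:similarimpliesdiffeomorphic}, but the surviving claim $\Sigma^{\varepsilon}_{X\Delta^{k}}\cong\Sigma^{\varepsilon}_{X}$ is the entire geometric content of Gompf's theorem, and your outline (remove the section circle, realize $f_{\Delta}$ by handle moves, track the framing) is a plan rather than a proof --- the framing bookkeeping and the identification of the final diagram are precisely what occupies Gompf's argument, and you rightly flag this as the obstacle. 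Since the paper itself imports the result wholesale, deferring that step to \cite{Gompf:2010-1} is acceptable, but you should state explicitly that part (ii) is being cited, not proved.
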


    \begin{remark}[{\cite[Remark 2.17.]{Kim-Yamada:2017-1}}]\label{remark:Gompf}
        $X_{c,d,n}\Delta^k$ and $\Delta^k X_{c,d,n}$ are similar to $X_{c,d,n+kd}$. $\Sigma^\varepsilon_{X_{c,d,n}}$ are diffeomorphic to $\Sigma ^\varepsilon _{X_{c,d,n+kd}}$ by Remark~\ref{remark:similarimpliesdiffeomorphic} and Theorem~\ref{theorem:Gompf}.
    \end{remark}
    
    We introduce simple notation following \cite[Section 2.4.]{Kim-Yamada:2017-1}. Consider 
	\[\mathcal{C}\mathcal{S}=\{(c,d,n)\in \mathbb{Z}^3\mid f_n(c)\equiv0\Mod{d}\textrm{ and }d\neq 0\}.\]
    By Proposition~\ref{proposition:csm2}, there is a bijection between $\mathcal{C}\mathcal{S}$ and the set of standard CS matrices such that $(c,d,n) \mapsto X_{c,d,n}$. We can define Gompf equivalence on $\mathcal{C}\mathcal{S}$.
    
    \begin{definition}[Gompf equivalence]\label{definition:Gompfequivalence}  
        Gompf equivalence $\sim$ is an equivalence relation on $\mathcal{C}\mathcal{S}$ generated by the following two relations,$\sim_S$ and $\sim_G$:
        \begin{enumerate}
            \item $(c,d,n) \sim_S (c^\prime,d^\prime,n^\prime)$ if and only if $X_{c,d,n}$ is similar to $X_{c^\prime,d^\prime,n^\prime}$ for any $(c,d,n),(c^\prime,d^\prime,n^\prime) \in \mathcal{C}\mathcal{S}$.
            \item $(c,d,n) \sim_G (c,d,n+kd)$ for any integer $k$ and any $(c,d,n) \in \mathcal{C}\mathcal{S}$.
        \end{enumerate}
    \end{definition}
	
	\begin{remark}\label{remark:Gompfequivalence}
	    Gompf equivalence on $\mathcal{C}\mathcal{S}$ induces Gompf equivalence on the set of the standard CS matrices. If two CS matrices are Gompf equivalent, corresponding CS spheres are diffeomorphic by Remark~\ref{remark:similarimpliesdiffeomorphic} and Remark~\ref{remark:Gompf}.
	\end{remark}

    We can reformulate Conjecture~\ref{conjecture:Gompf} using $\mathcal{C}\mathcal{S}$.
    
    \begin{conjecture}[{\cite[Conjecture 2.20.]{Kim-Yamada:2017-1}}]\label{conjecture:reformulation}For every $(c,d,n)\in \mathcal{C}\mathcal{S}$, $(c,d,n)\sim (1,1,2)$.
    \end{conjecture}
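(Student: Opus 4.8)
This conjecture is not proved in the paper; it is a restatement (in the language of $\mathcal{CS}$) of Gompf's Conjecture~\ref{conjecture:Gompf}, and it remains open. Accordingly, what follows is a plan for the only kind of progress one can realistically make: reducing the conjecture to a tractable statement and verifying it on large swaths of $\mathcal{CS}$, which is precisely the strategy behind Theorem~\ref{theorem:introinfiniteseries} and the corollary.

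The plan is to organize $\mathcal{CS}$ by trace and attack the conjecture trace by trace. First I would record the baseline: $(1,1,2) \sim_S (1,1,2)$ trivially, and by Remark~\ref{remark:numberofCSmatrices} the matrix $A_0 = X_{1,1,2}$ lies in $\mathcal{CS}$, so the target of the equivalence is a genuine standard CS matrix. Next, using the $\sim_G$ relation $(c,d,n)\sim_G(c,d,n+kd)$, observe that within a fixed pair $(c,d)$ the trace $n$ only matters modulo $d$; combined with Theorem~\ref{theorem:csm} (every CS matrix is similar to a standard one) this means the whole problem is controlled by the finitely many similarity classes of CS matrices of each trace, i.e.\ by the ideal class monoid of Aitchison--Rubinstein mentioned in the introduction. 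So the conjecture is equivalent to: for every trace $n$, every element of that monoid is Gompf-equivalent to the class of $A_n$, and the class of $A_n$ is in turn Gompf-equivalent (via the $\sim_G$ chain $A_n \sim A_{n-2}$ coming from Remark~\ref{remark:Gompf} with $d=1$) to the class of $A_0 = (1,1,2)$.

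The concrete steps I would carry out are: (i) for a given trace $n$, enumerate the divisors $d$ with $f_n(c)\equiv 0 \pmod d$ having a solution $c$, giving all standard CS matrices of that trace; (ii) compute the $\sim_S$ classes among these using the explicit similarity criterion behind Proposition~\ref{proposition:csm2} and Remark~\ref{remark:CS}; (iii) for each class, hunt for a chain of $\sim_S$ and $\sim_G$ moves landing on $(1,1,2)$, the key tool being Remark~\ref{remark:Gompf}, which lets one slide $(c,d,n)$ to $(c,d,n+kd)$ and then re-standardize the resulting matrix (which typically lowers $d$); (iv) iterate, using the fact that reaching $d=1$ forces the matrix into the $A_n$ subfamily, which is already known to be standard. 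The families in Theorem~\ref{theorem:introinfiniteseries} show that step~(iii) can sometimes be done for \emph{infinitely} many traces at once, by finding a single congruence pattern $(c,p,p^2k+n_0)$ whose reduction mod $p$ matches a trace $n'$ where the conjecture is already known.

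The main obstacle is step~(iii) in full generality: there is no known algorithm guaranteed to find a reducing chain, and the number of $\sim_S$ classes per trace grows, so a uniform argument would require understanding the monoid structure of the ideal class monoid well enough to exhibit explicit generators for the Gompf equivalence — something that, as the computer verifications only up to trace $74$ suggest, nobody currently knows how to do. Thus I would not expect to prove Conjecture~\ref{conjecture:reformulation} outright; the realistic deliverable is the extension of the verified range and the new infinite families, exactly as stated in Theorem~\ref{theorem:introinfiniteseries} and Corollary~\ref{corollary:cpn0_intro}.
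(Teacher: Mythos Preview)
You are correct that this statement is an open conjecture, not a theorem, and the paper gives no proof of it; your identification of it as a reformulation of Conjecture~\ref{conjecture:Gompf} is exactly right. The partial-progress strategy you sketch---organize by trace, compute similarity classes via the ideal class monoid, and reduce via $\sim_G$ to smaller traces already verified (your steps (i)--(iv))---is precisely the method the paper employs in Sections~\ref{section:representatives}--\ref{section:maintheorem}, so nothing further is needed here.
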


\subsection{Latimer-MacDuffee-Taussky correspondence}\label{subsection:Latimer-MacDuffee-Taussky}
    In this subsection, we recall a result by Latimer-MacDuffee and Taussky \cite{Latimer-MacDuffee:1933-1,Taussky:1949-1} in order to calculate similarity classes of CS matrices systematically.
        
    Let $R$ be an integral domain. We can define an equivalence relation $\approx$ on the set of non zero ideals of $R$, $\mathcal{I}(R)$: $I \approx J$ if and only if there exists non zero elements $\alpha,\beta \in R$ such that $\alpha I = \beta J$. Here, the general multiplication of ideals induces an operation to $\mathcal{I}(R) / \approx$. This set is a monoid and the identity element is the equivalence class of $R$. This monoid is said to be an ideal class monoid (ICM) or an ideal class semigroup. We write the ideal class monoid as $C(R)$. $I \in \mathcal{I}(R)$ is said to be \textbf{invertible} when $[I] \in C(R)$ is an invertible element in $C(R)$.
    
	If $R=\mathbb{Z}[\theta]$ where $\theta$ is a root of a monic polynomial $f(x)$ which is irreducible over $\mathbb{Q}$, we can apply the following correspondence to ICM.
    
    \begin{theorem}[Latimer-MacDuffee \cite{Latimer-MacDuffee:1933-1}, Taussky \cite{Taussky:1949-1}]\label{theorem:LMT}
    	Suppose $f \in \mathbb{Z}[x]$ is a monic polynomial of degree $n$ and irreducible over $\mathbb{Q}$. Let  $\theta$ be a root of $f$.
    	Then there is a bijection between $C(\ztheta{})$ and $\{A \in M(n;\mathbb{Z}) | f(A) = O\}$/$\sim_S$.
    \end{theorem}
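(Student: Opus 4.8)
The plan is to realize both sides of the asserted bijection as sets of isomorphism classes of modules over $R := \mathbb{Z}[\theta] \cong \mathbb{Z}[x]/(f(x))$ and to match them through that common description. Write $K := \mathbb{Q}(\theta)$; since $f$ is monic and irreducible over $\mathbb{Q}$, $K$ is a number field of degree $n$ and $R \otimes_{\mathbb{Z}} \mathbb{Q} = \mathbb{Q}[x]/(f(x)) = K$. First I would send a matrix to an ideal class: given $A \in M(n;\mathbb{Z})$ with $f(A) = O$, let $M_A$ be $\mathbb{Z}^n$ turned into an $R$-module by letting $\theta$ act as $A$, which is well defined precisely because $f(A) = O$. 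Then $M_A \otimes_{\mathbb{Z}} \mathbb{Q} = \mathbb{Q}^n$ is a unital module over $R \otimes \mathbb{Q} = K$, hence, $K$ being a field of degree $n$, a $K$-vector space of dimension one by a count of $\mathbb{Q}$-dimensions. Fixing a $K$-basis vector identifies $M_A \otimes \mathbb{Q}$ with $K$ and embeds $M_A$ as a finitely generated nonzero $R$-submodule of $K$, that is, a fractional ideal of $R$; multiplying by a suitable nonzero element of $R$ to clear denominators produces an ideal $I_A \in \mathcal{I}(R)$. I would then check that $[I_A] \in C(\mathbb{Z}[\theta])$ depends only on the $GL(n;\mathbb{Z})$-conjugacy class of $A$: changing the $K$-basis vector or the denominator-clearing element merely rescales the fractional ideal, and replacing $A$ by $PAP^{-1}$ with $P \in GL(n;\mathbb{Z})$ yields an isomorphic $R$-module, the isomorphism being $P$ itself, so everything is absorbed into the relation $\approx$ defining $C(R)$.

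Conversely, I would send an ideal to a matrix. A nonzero ideal $I \subseteq R$ is, as an abelian group, free of rank $n$, since it lies in $R \cong \mathbb{Z}^n$ and contains $\alpha R$ for any nonzero $\alpha \in I$, which already has rank $n$. Fixing a $\mathbb{Z}$-basis of $I$, multiplication by $\theta$ is represented by a matrix $A_I \in M(n;\mathbb{Z})$, and $f(A_I) = O$ because $\theta$ satisfies $f$. A different $\mathbb{Z}$-basis changes $A_I$ by $GL(n;\mathbb{Z})$-conjugacy, and any $J$ with $\alpha I = \beta J$ for nonzero $\alpha,\beta \in R$ is $R$-module isomorphic to $I$ and therefore gives a conjugate matrix, so $[I] \mapsto [A_I]$ is well defined on $C(\mathbb{Z}[\theta])$. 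The two constructions should then be seen to be mutually inverse by unwinding definitions: from $A$, the module $M_A$ already has $\theta$ acting as $A$ in the standard basis, which maps to a $\mathbb{Z}$-basis of $I_A$, so the second construction returns the class of $A$; from $I$, the second construction produces $I$ with its tautological $R$-structure, whose embedding into $K$ in the first construction may be taken to be the inclusion, returning $[I]$. Having mutually inverse maps gives the bijection, so surjectivity and injectivity need no separate argument.

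The step I expect to be the main obstacle is the dictionary between $R$-modules and ideals, because $R = \mathbb{Z}[\theta]$ is in general only an order and need not be integrally closed; in particular one must work with the full ideal class \emph{monoid} $C(R)$ rather than a class group, and one must verify that an arbitrary $R$-module isomorphism $\phi \colon I \to J$ between nonzero ideals is multiplication by some $\gamma \in K^\times$. This I would obtain by tensoring $\phi$ with $\mathbb{Q}$ to get a $K$-linear map $K \to K$ (using $I \otimes \mathbb{Q} = J \otimes \mathbb{Q} = K$), which must be multiplication by a nonzero scalar $\gamma$; then $J = \phi(I) = \gamma I$, and writing $\gamma = \alpha/\beta$ with $\alpha,\beta \in R \setminus \{0\}$ gives $\alpha I = \beta J$, which is exactly the relation $\approx$. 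The remaining ingredients — the identification of $\mathbb{Q}^n$ as a one-dimensional $K$-space, the freeness of nonzero ideals as abelian groups, and the correspondence between $\mathbb{Z}$-basis changes and $GL(n;\mathbb{Z})$-conjugacy of integer matrices — are routine.
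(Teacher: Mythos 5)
The paper does not prove this theorem: it is quoted verbatim from Latimer--MacDuffee and Taussky as a black box, so there is no internal argument to compare yours against. Judged on its own, your proposal is the standard and correct module-theoretic proof of the correspondence. The two constructions (a matrix $A$ with $f(A)=O$ gives the $\mathbb{Z}[\theta]$-module $M_A=\mathbb{Z}^n$ with $\theta$ acting as $A$, which embeds as a fractional ideal after identifying $M_A\otimes\mathbb{Q}$ with the one-dimensional $K$-vector space $K$; conversely a nonzero ideal is $\mathbb{Z}$-free of rank $n$ and multiplication by $\theta$ gives a matrix) are the right ones, and you correctly isolate the one point that genuinely needs $R=\mathbb{Z}[\theta]$ to be treated as an order rather than a Dedekind domain, namely that every $R$-module isomorphism of nonzero ideals is multiplication by an element of $K^\times=\operatorname{Frac}(R)^\times$, which you get by tensoring with $\mathbb{Q}$; this is exactly what makes the classes match the relation $\approx$ defining the ideal class monoid rather than a class group.

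One small mismatch with the paper's conventions is worth flagging: your argument naturally produces $GL(n;\mathbb{Z})$-conjugacy classes (a change of $\mathbb{Z}$-basis of an ideal is an arbitrary element of $GL(n;\mathbb{Z})$), whereas the paper's relation $\sim_S$ is defined via conjugation in $SL(3;\mathbb{Z})$. For the case the paper actually uses ($n=3$, or any odd $n$) the two notions coincide, since a conjugating matrix $P$ of determinant $-1$ can be replaced by $-P\in SL(n;\mathbb{Z})$ without changing the conjugation; for even $n$ the $SL$ version would require a separate remark. Apart from noting this, nothing in your outline needs repair.
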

    
	Applying Theorem~\ref{theorem:LMT} to CS matrices enables us to deal with standard CS matrices systematically.
    \begin{corollary}[{\cite[page 44]{Aitchison-Rubinstein:1984-1}}]\label{corollary:Aitchison-Rubinstein}
		Let $\theta_n$ be a root of $f_n(x) = x^3 - n x^2 + (n-1) x -1$, which is the characteristic polynomial of CS matrices with trace $n$.
        There is a one-to-one correspondence between the set of similarity classes of 
		CS matrices with trace $n$ and $C(\ztheta{n} )$ such that
		\[X_{c,d,n}=
		\begin{bmatrix}
		0 & a & b \\
		0 & c & d \\
		1 & 0 & n-c \\
		\end{bmatrix}
		\mapsto 
		[\langle \theta_n -c,d \rangle ]
		\]
		
		where $f_n(c)\equiv0\Mod{d}$, $b=(c-1)(n-c-1)$ and $ad-bc=1$.
	\end{corollary}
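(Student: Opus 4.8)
The plan is to read this off from the Latimer--MacDuffee--Taussky correspondence (Theorem~\ref{theorem:LMT}) applied to the polynomial $f=f_n$. Since $f_n$ is a monic cubic and is irreducible over $\mathbb{Q}$ by Lemma~\ref{lemma:irr}, Theorem~\ref{theorem:LMT} gives a bijection between $C(\ztheta{n})$ and $\{A\in M(3;\mathbb{Z})\mid f_n(A)=O\}/\sim_S$. By Lemma~\ref{lemma:characteristicpolynomial} the latter set is precisely the set of CS matrices with trace $n$, so the right-hand side is the set of similarity classes of such matrices. This is already the asserted one-to-one correspondence; what remains is to verify that on standard representatives it takes the explicit form $X_{c,d,n}\mapsto[\langle\theta_n-c,d\rangle]$. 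By Theorem~\ref{theorem:csm} every CS matrix with trace $n$ is similar to a standard one, so it suffices to check the formula for each $X_{c,d,n}$ with $f_n(c)\equiv 0\Mod{d}$.

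Recall that the LMT bijection sends a nonzero ideal $I$ of $\ztheta{n}$, together with an ordered $\mathbb{Z}$-basis $(\omega_1,\omega_2,\omega_3)$ of $I$, to the matrix representing ``multiplication by $\theta_n$'' in that basis; a different choice of basis produces a $\sim_S$-conjugate matrix, and $f_n$ annihilates it because $f_n(\theta_n)=0$. So I would first check that $\langle\theta_n-c,d\rangle$ is a nonzero ideal of $\mathbb{Z}$-index $|d|$ in $\ztheta{n}$: transporting along $\ztheta{n}\cong\mathbb{Z}[x]/(f_n)$ one gets $\ztheta{n}/\langle\theta_n-c,d\rangle\cong\mathbb{Z}[x]/(f_n(x),x-c,d)\cong\mathbb{Z}/\gcd(d,f_n(c))\mathbb{Z}=\mathbb{Z}/d\mathbb{Z}$, the last step using the hypothesis $f_n(c)\equiv 0\Mod{d}$.

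Next I would exhibit the basis explicitly. With $b=(c-1)(n-c-1)$ and $a$ the unique integer satisfying $ad-bc=1$ (such $a$ exists because $d\mid 1+bc=-f_n(c)$, and in fact $a=-f_n(c)/d$), one can take the ordered basis
\[
\bigl(\,\theta_n^2-n\theta_n+c(n-c),\ \ d,\ \ \theta_n-c\,\bigr).
\]
These three elements lie in $\langle\theta_n-c,d\rangle$ (the first is divisible by $\theta_n-c$, as one sees by reducing modulo $\theta_n-c$), and the determinant of their coordinate matrix with respect to $(1,\theta_n,\theta_n^2)$ equals $\pm d$, so by the index count above they form a $\mathbb{Z}$-basis of $\langle\theta_n-c,d\rangle$. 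Computing $\theta_n\cdot\omega_i$ for $i=1,2,3$, reducing powers of $\theta_n$ via $\theta_n^3=n\theta_n^2-(n-1)\theta_n+1$ and simplifying with $ad-bc=1$ and $b=c(n-c)-n+1$, one finds that the matrix of multiplication by $\theta_n$ in this basis is exactly $X_{c,d,n}$. This identifies the restriction of the LMT bijection to standard matrices with the stated map, which finishes the proof.

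I expect the main difficulty to be purely computational: choosing the ordered $\mathbb{Z}$-basis so that the resulting companion-type matrix is literally $X_{c,d,n}$ rather than merely a $\sim_S$-conjugate of it, and being consistent about conventions --- whether the LMT matrix is built from the rows or the columns of the structure constants, and the harmless sign ambiguity $\langle\theta_n-c,d\rangle=\langle\theta_n-c,-d\rangle$, which forces $X_{c,d,n}\sim_S X_{c,-d,n}$. There is no serious conceptual obstacle once Theorem~\ref{theorem:LMT}, Lemma~\ref{lemma:irr}, Lemma~\ref{lemma:characteristicpolynomial}, and Theorem~\ref{theorem:csm} are in hand.
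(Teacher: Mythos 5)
Your proposal is correct and is essentially the intended argument: the paper itself gives no proof of this corollary (it is quoted from Aitchison--Rubinstein and presented as an application of Theorem~\ref{theorem:LMT}), and your route --- LMT plus Lemma~\ref{lemma:irr} and Lemma~\ref{lemma:characteristicpolynomial} to identify the similarity classes, then an explicit $\mathbb{Z}$-basis of $\langle\theta_n-c,d\rangle$ realizing multiplication by $\theta_n$ as $X_{c,d,n}$ --- is exactly the standard way to fill in the details. I checked your basis $(\,(\theta_n-c)(\theta_n-n+c),\ d,\ \theta_n-c\,)$: its coordinate determinant is $d$, and using $\theta_n^3=n\theta_n^2-(n-1)\theta_n+1$, $b=c(n-c)-n+1$ and $ad=1+bc=-f_n(c)$ the multiplication-by-$\theta_n$ matrix in the row convention is literally $X_{c,d,n}$, so the verification goes through as you describe.
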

        
    \begin{remark}[{\cite[Remark 2.15.]{Kim-Yamada:2017-1}}]\label{remark:changenbyc}
    We easily see that $\langle\theta_n-c,d\rangle=\langle\theta_n-c-kd,d\rangle$. By by Corollary~\ref{corollary:Aitchison-Rubinstein}, $X_{c,d,n}$ and $X_{c+kd,d,n}$ are similar.
    \end{remark}

    \begin{remark}
        $[\langle \theta_{n+2} -1,1 \rangle]$ is $[\ztheta{n}]$, which is the identiy element. Therefore, a CS matrix $A_n=X_{1,1,n+2}$ corresponds to the identity element in $C(\ztheta{n} )$.
    \end{remark}

\subsection{Symmetry between Cappell-Shaneson matrices}\label{subsection:symmetry}
    In this subsection, we introduce symmetry between CS matrices. Kim and Yamada \cite{Kim-Yamada:2017-1} proved this by proving symmetry in algebra and sending it to the world of CS matrices.

\begin{theorem}[{\cite[Theorem 3.3]{Kim-Yamada:2017-1}}]\label{theorem:sym1}
		There is a bijection between the set of similarity classes of 
		CS matrices with trace $n$ and the set of similarity classes of CS matrices with trace $5-n$, 
		which is explicitly defined by
		\[A=
		\begin{bmatrix}
		0 & a & b \\
		0 & c & d \\
		1 & 0 & n-c \\
		\end{bmatrix}		\mapsto A^*=
		\begin{bmatrix}
		0 & a^*& b^* \\
		0 & c^* & d^* \\
		1 & 0 & 5-n-c^* \\
		\end{bmatrix}
		\]
		where $c^*=p_n(c)=c^2+(1-n)c+1$, $d^*=d$. In particular, $X_{c,d,n}^*=X_{p_n(c),d,5-n}^{\vphantom{*}}$.
	\end{theorem}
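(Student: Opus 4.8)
The plan is to realize the bijection directly at the level of matrices and then read off the explicit formula through the ideal–class dictionary. The starting point is the elementary polynomial identity
\[
f_{5-n}\!\left(\tfrac{x}{x-1}\right)(x-1)^3 \;=\; -\,f_n(x),
\]
which one verifies by clearing denominators and comparing coefficients (the leading coefficient on the left works out to $1-(5-n)+(4-n)-1=-1$). If $A$ is a CS matrix with trace $n$, then $\det(A-I)=1$, so $A-I$ is invertible over $\mathbb{Z}$ and $B:=A(A-I)^{-1}$ is again an integral matrix. Substituting $x=A$ into the polynomial form of the identity (legitimate since $A-I$ is invertible and everything is a polynomial in the single matrix $A$) gives $f_{5-n}(B)\,(A-I)^3=-f_n(A)=O$, hence $f_{5-n}(B)=O$; by Lemma~\ref{lemma:characteristicpolynomial} together with Lemma~\ref{lemma:irr}, $B$ is a CS matrix with trace $5-n$. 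A one-line computation gives $B-I=(A-I)^{-1}$, so $B(B-I)^{-1}=A(A-I)^{-1}(A-I)=A$: the assignment $A\mapsto B$ is an involution, and it is visibly compatible with similarity ($PAP^{-1}\mapsto PBP^{-1}$). Hence it descends to a bijection between similarity classes with trace $n$ and those with trace $5-n$.

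Next I would pin down the algebra that identifies $B$ in standard form. From $f_n(0)=f_n(1)=-1$ one gets $N_{K/\mathbb{Q}}(\theta_n)=N_{K/\mathbb{Q}}(\theta_n-1)=1$ (here $K=\mathbb{Q}(\theta_n)$), so $\theta_n$ and $\theta_n-1$ are units of $R:=\ztheta{n}$; therefore $\psi:=\theta_n/(\theta_n-1)\in R$, and by the identity above $\psi$ is a root of $f_{5-n}$. Since $f_{5-n}$ is irreducible, $\mathbb{Q}(\psi)=K$ and $\mathbb{Z}[\psi]$ is an order in $K$; and since $\psi\in R$ while $f_{5-n}(1)=-1$ forces $\psi-1$ to be a unit of $\mathbb{Z}[\psi]$ so that $\theta_n=\psi(\psi-1)^{-1}\in\mathbb{Z}[\psi]$, we conclude $\mathbb{Z}[\psi]=R$. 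By Theorem~\ref{theorem:LMT} and Corollary~\ref{corollary:Aitchison-Rubinstein} we may take $X_{c,d,n}$ to be the matrix of multiplication by $\theta_n$ on the $\mathbb{Z}$-module $I:=\langle\theta_n-c,d\rangle\subseteq R$ with respect to some basis; then $B=X_{c,d,n}(X_{c,d,n}-I)^{-1}$ is precisely the matrix of multiplication by $\psi$ on the same module $I$, which, through the trace-$(5-n)$ instance of Corollary~\ref{corollary:Aitchison-Rubinstein}, represents the class $[I]\in C(R)$ in the form $[\langle\psi-c^{*},d^{*}\rangle]$.

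It then remains to show $I=\langle\psi-p_n(c),d\rangle$, so that $c^{*}=p_n(c)$, $d^{*}=d$. The load-bearing computation is $c-p_n(c)(c-1)=-f_n(c)$ (expand $p_n(c)=c^2+(1-n)c+1$), which reduces modulo $\theta_n-c$ and modulo $d$ — using $d\mid f_n(c)$ — to give $\theta_n-p_n(c)(\theta_n-1)\in\langle\theta_n-c,d\rangle$; multiplying by the unit $(\theta_n-1)^{-1}$ shows $\psi-p_n(c)\in I$, hence $\langle\psi-p_n(c),d\rangle\subseteq I$. For the reverse inclusion I use the index formula $[R:\langle\alpha-c,d\rangle]=\gcd(f(c),d)$ valid for any generator $\alpha$ of $R$ with minimal polynomial $f$: this gives $[R:I]=d$ (as $d\mid f_n(c)$) and $[R:\langle\psi-p_n(c),d\rangle]=\gcd(f_{5-n}(p_n(c)),d)\le d$, and combined with the inclusion both indices must equal $d$. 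Hence $I=\langle\psi-p_n(c),d\rangle$ (and, as a byproduct, $d\mid f_{5-n}(p_n(c))$, so $X_{p_n(c),d,5-n}$ is a genuine standard CS matrix), so $A^{*}=B$ is similar to $X_{p_n(c),d,5-n}$, i.e.\ $X_{c,d,n}^{*}=X_{p_n(c),d,5-n}^{\vphantom{*}}$.

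I expect the main obstacle to be bookkeeping rather than any deep difficulty: one must apply the correspondence of Corollary~\ref{corollary:Aitchison-Rubinstein} with a consistent normalization (``matrix of multiplication by the chosen generator on the ideal'') for both traces, so that replacing $B$ by a similar matrix is harmless, and one must be careful that $\mathbb{Z}[\psi]$ genuinely equals $R$ — that is, that $\psi-1$, not merely $\theta_n-1$, is a unit — since otherwise $I$ would only be a module over a smaller order and the second application of the correspondence would be unjustified. The polynomial identity for $f_{5-n}$ and the reduction $c-p_n(c)(c-1)=-f_n(c)$ are the two computations that make everything work; the rest is formal.
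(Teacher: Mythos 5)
Your proposal is correct, and it follows essentially the route this paper attributes to Kim--Yamada for their Theorem 3.3 (which is only cited here, not reproved): establish the algebraic symmetry $\ztheta{n}\cong\ztheta{5-n}$ via the unit $\theta_n/(\theta_n-1)$ and push the ideal $\langle\theta_n-c,d\rangle$ through the Latimer--MacDuffee--Taussky correspondence, with the identity $c-p_n(c)(c-1)=-f_n(c)$ doing the work of identifying the image class as $[\langle\theta_{5-n}-p_n(c),d\rangle]$. Your explicit matrix-level involution $A\mapsto A(A-I)^{-1}$ and the index argument $[\mathbb{Z}[\alpha]:\langle\alpha-c,d\rangle]=\gcd(f(c),d)$ are both sound and make the well-definedness and bijectivity particularly transparent.
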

This map is compatible with Gompf equivalence.

\begin{lemma}[{\cite[Theorem 3.5]{Kim-Yamada:2017-1}}]\label{lemma:Gompfdual}Suppose that $A$ and $B$ are two standard CS matrices such that $A$ and $B$ are Gompf equivalent. Then $A^*$ and $B^*$ are also Gompf equivalent.
\end{lemma}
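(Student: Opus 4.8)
The plan is to reduce the assertion to the two generating relations of Gompf equivalence and to check that the $*$-operation respects each of them. Recall from Definition~\ref{definition:Gompfequivalence} that $\sim$ is the equivalence relation on $\mathcal{C}\mathcal{S}$ generated by $\sim_S$ and $\sim_G$, and that by Theorem~\ref{theorem:sym1} the assignment $X_{c,d,n}\mapsto X_{c,d,n}^{*}=X_{p_n(c),d,5-n}$ carries a standard CS matrix to a standard CS matrix, where $p_n(c)=c^2+(1-n)c+1$; write $(c,d,n)^{*}=(p_n(c),d,5-n)\in\mathcal{C}\mathcal{S}$ for the corresponding triple. It then suffices to verify: (i) if $(c,d,n)\sim_S(c',d',n')$ then $(c,d,n)^{*}\sim(c',d',n')^{*}$; and (ii) if $(c,d,n)\sim_G(c,d,n+kd)$ then $(c,d,n)^{*}\sim(c,d,n+kd)^{*}$. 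Granting (i) and (ii), the $*$-operation descends to Gompf equivalence classes, which is precisely the statement of the lemma.

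Step (i) is immediate from Theorem~\ref{theorem:sym1}: that theorem yields a \emph{bijection} of similarity classes, so in particular $*$ is well defined on similarity classes; hence $X_{c,d,n}\sim_S X_{c',d',n'}$ forces $X_{c,d,n}^{*}\sim_S X_{c',d',n'}^{*}$, and $\sim_S$-related triples are a fortiori Gompf equivalent.

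Step (ii) is where the actual computation lies, and I would carry it out directly. Starting from $(c,d,n+kd)^{*}=(p_{n+kd}(c),d,5-n-kd)$ and using $p_m(c)=c^2+(1-m)c+1$, one gets $p_{n+kd}(c)=p_n(c)-kcd$; in particular $p_{n+kd}(c)=p_n(c)+(-kc)d$ differs from $p_n(c)$ by a multiple of $d$. Therefore Remark~\ref{remark:changenbyc} applies and gives $X_{p_{n+kd}(c),d,5-n-kd}\sim_S X_{p_n(c),d,5-n-kd}$, i.e. $(c,d,n+kd)^{*}\sim_S(p_n(c),d,5-n-kd)$. Since $5-n-kd=(5-n)+(-k)d$, the relation $\sim_G$ gives $(p_n(c),d,5-n-kd)\sim_G(p_n(c),d,5-n)=(c,d,n)^{*}$. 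Chaining these two steps yields $(c,d,n+kd)^{*}\sim(c,d,n)^{*}$, which is (ii).

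I expect the only point requiring care is the bookkeeping in (ii): one must confirm the identity $p_{n+kd}(c)=p_n(c)-kcd$ and observe that $kcd$ is genuinely divisible by $d$, so that the shift of the first coordinate is exactly of the type handled by Remark~\ref{remark:changenbyc} (one also notes in passing that all triples appearing lie in $\mathcal{C}\mathcal{S}$, since shifting the trace or the first coordinate by a multiple of $d$ preserves the congruence $f_n(c)\equiv 0\Mod{d}$). Everything else is formal. An alternative route, closer to the source, would be to transport the question to ideal class monoids via Corollary~\ref{corollary:Aitchison-Rubinstein} and prove the corresponding algebraic symmetry there; the generator-by-generator verification above seems more direct.
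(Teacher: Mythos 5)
Your proof is correct: the paper itself only quotes this lemma from Kim--Yamada without reproducing a proof, and your generator-by-generator verification --- well-definedness of $*$ on similarity classes from Theorem~\ref{theorem:sym1}, plus the identity $p_{n+kd}(c)=p_n(c)-kcd$ combined with Remark~\ref{remark:changenbyc} and a trace shift by $-kd$ to handle $\sim_G$ --- is exactly the argument one expects and matches the source's approach. No gaps.
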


Theorem~\ref{theorem:sym1} and Lemma~\ref{lemma:Gompfdual} concludes the following theorem.

 \begin{theorem}[{\cite[Theorem A.]{Kim-Yamada:2017-1}}]\label{theorem:sym2}Conjecture~\ref{conjecture:Gompf} is true for trace $n$ if and only if Conjecture~\ref{conjecture:Gompf} is true for trace $5-n$ for any integer $n$. 
 \end{theorem}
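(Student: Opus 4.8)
The plan is to push the hypothesis through the bijection $A\mapsto A^*$ of Theorem~\ref{theorem:sym1}, exploiting that this bijection is compatible with Gompf equivalence by Lemma~\ref{lemma:Gompfdual}. Concretely: assume Conjecture~\ref{conjecture:Gompf} is true for trace $n$, so that every CS matrix of trace $n$ is Gompf equivalent to $A_0=X_{1,1,2}$, and take an arbitrary CS matrix $B$ of trace $5-n$; we want $B$ to be Gompf equivalent to $A_0$. By Theorem~\ref{theorem:csm} we may assume $B$ is standard.

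First I would use surjectivity of the bijection in Theorem~\ref{theorem:sym1}: since $[A]\mapsto[A^*]$ maps similarity classes of trace-$n$ CS matrices onto similarity classes of trace-$(5-n)$ CS matrices, there is a CS matrix $A$ of trace $n$ with $A^*\sim_S B$. Replacing $A$ by a standard representative of its similarity class (Theorem~\ref{theorem:csm}) and using that $X_{c,d,n}^*=X_{p_n(c),d,5-n}$ is again standard, we obtain standard CS matrices $A$ and $A^*$ with $A^*\sim_S B$; in particular $A^*$ and $B$ are Gompf equivalent, since $\sim_S$ is among the generating relations of Gompf equivalence on $\mathcal{C}\mathcal{S}$.

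Next I would apply the hypothesis to $A$: it is Gompf equivalent to $A_0$, and both are standard, so Lemma~\ref{lemma:Gompfdual} gives that $A^*$ is Gompf equivalent to $A_0^*$. A short computation from $X_{c,d,n}^*=X_{p_n(c),d,5-n}$ and $p_2(1)=1$ yields $A_0^*=X_{1,1,2}^*=X_{1,1,3}=A_1$, and $X_{1,1,3}=X_{1,1,2+1\cdot 1}$ is Gompf equivalent to $X_{1,1,2}=A_0$ by the relation $\sim_G$ (equivalently Remark~\ref{remark:Gompf}). Chaining these, $B$ is Gompf equivalent to $A^*$, which is Gompf equivalent to $A_0^*=A_1$, which is Gompf equivalent to $A_0$; hence $B$ is Gompf equivalent to $A_0$. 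As $B$ was an arbitrary CS matrix of trace $5-n$, Conjecture~\ref{conjecture:Gompf} is true for trace $5-n$, and the converse is the same argument with $n$ replaced by $5-n$, using $5-(5-n)=n$.

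The hard part is essentially already packaged in the results we invoke: the genuinely nontrivial facts are that $*$ is a well-defined bijection on similarity classes (Theorem~\ref{theorem:sym1}) and that it respects Gompf equivalence (Lemma~\ref{lemma:Gompfdual}). Within the present argument the only point requiring care is bookkeeping — one must consistently pass to standard representatives on both sides so that Lemma~\ref{lemma:Gompfdual}, stated for standard CS matrices, applies, which is exactly what Theorem~\ref{theorem:csm} provides — together with the elementary observation that $A_0^*=X_{1,1,3}$ is Gompf equivalent, though not similar, to $A_0=X_{1,1,2}$.
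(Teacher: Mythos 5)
Your argument is correct and is exactly the route the paper takes: it deduces the theorem by combining the bijection of Theorem~\ref{theorem:sym1} with the compatibility of $A\mapsto A^*$ with Gompf equivalence from Lemma~\ref{lemma:Gompfdual}, which is precisely what you unpack (including the small check that $A_0^*=X_{1,1,3}\sim_G X_{1,1,2}=A_0$). You have simply written out the details that the paper leaves implicit in its one-line justification.
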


\begin{remark}
Symmetry also holds for $\ztheta{n}$,$\mathbb{Q}(\theta_n)$,$C(\ztheta{n})$\cite[Section 3.]{Kim-Yamada:2017-1}.
\begin{itemize}
		\item $\ztheta{n}$ is isomorphic to $\ztheta{5-n}$.
		\item $\mathbb{Q}(\theta_n)$ is isomorphic to $\mathbb{Q}(\theta_{5-n})$.
        \item $C(\ztheta{n})$ is isomorphic to $C(\ztheta{5-n})$.
	\end{itemize}
\end{remark}

\section{Ideal class monoid \texorpdfstring{$C(\ztheta{n})$}{C(Z[theta])}}\label{section:idealclassmonoid}

	In this section, we characterize when ideal classes in $C(\mathbb{Z}[\theta_n])$ is not invertible. If an ideal class is not invertible, the Cappell-Shaneson matrix corresponding to it is not similar to $A_n$, which corresponds to the invertible ideal class.

	\subsection{Dedekind-Kummer theorem}\label{subsection:Dedekind-Kummer}
In this subsection, we recall Dedekind-~\\Kummer theorem following \cite{Kim-Yamada:2017-1} to characterize when an ideal $\langle \theta_n-c,d\rangle$ is invertible.

	\begin{definition}[order]A \textbf{number field} $K$ is a finite degree field extension of $\mathbb{Q}$. For a number field $K$ with degree $n$, a subring $R$ of the number field $K$ is called an \textbf{order} if $R$ is a free $\mathbb{Z}$-module of rank $n$.
	\end{definition}
	
	\begin{example} Let $\theta_n$ be a root of the monic, irreducible polynomial $f_n(x)=x^3-nx^2+(n-1)x-1$. $\mathbb{Z}[\theta_n]$ is an order in the number field $\mathbb{Q}(\theta_n)$ 
    \end{example}

    \begin{remark}If $R$ is the ring of integers of an algebraic number field, then $C(R)$ is isomorphic to the ideal class group. We can understand ICM as an generalization of the ideal class group.
	\end{remark}
    \begin{definition}[integrally closed]
    	Let $R$ be an integral domain and $K$ the fraction field of $R$. We say that $R$ is \textbf{integrally closed} if the following condition holds: if $\alpha \in K$ is a root of a monic in $R[x]$, then $\alpha \in R$.
    \end{definition}

    \begin{definition}[Dedekind domain]
    	Let $R$ be an integral domain. We say that $R$ is a \textbf{Dedekind domain} if the following conditions hold:
    \begin{enumerate}
        \item $R$ is a Noetherian ring,
        \item $R$ is not a field and every non-zero prime ideal of $R$ is a maximal ideal,
        \item $R$ is integrally closed.
    \end{enumerate}
    \end{definition}
	\begin{theorem}[{\cite[Sections 6--7]{Stevenhagen:2008-1}}]\label{theorem:Dedekind} Let $\mathcal{O}_K$ be the ring of integers of $K$. For an order $R\subset K$, the following conditions are equivalent:
	\begin{enumerate}
	\item $R$ is integrally closed,
	\item  $R$ equals to $\mathcal{O}_K$, 
	\item $R$ is a Dedekind domain,
	\item Every ideal of $R$ is invertible,
	\item $C(R)$ is a group.
	\end{enumerate}
	\end{theorem}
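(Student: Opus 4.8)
The five conditions are classical in algebraic number theory, so the plan is to organize the (well known) arguments into a short cycle, after first recording two structural properties that every order $R\subset K$ automatically enjoys. First, $R$ is a free $\mathbb{Z}$-module of finite rank, hence a Noetherian ring. Second, every $r\in R$ is a root of the characteristic polynomial of the $\mathbb{Z}$-linear map $x\mapsto rx$ on $R$, which is monic with integer coefficients; thus $R$ is integral over $\mathbb{Z}$, which gives $R\subseteq\mathcal{O}_K$, $\mathrm{Frac}(R)=K$ (since $R$ already spans $K$ over $\mathbb{Q}$), and $\dim R=\dim\mathbb{Z}=1$ by lying-over/going-up. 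In particular every nonzero prime of $R$ is maximal, so among orders the only possible failure of the Dedekind axioms is failure of integral closedness; this is what makes the equivalences cheap.

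With this in hand I would prove $(1)\Leftrightarrow(2)\Leftrightarrow(3)$ first. The classical fact that $\mathcal{O}_K$ is integrally closed and is a Dedekind domain gives $(2)\Rightarrow(3)$, and $(3)\Rightarrow(1)$ is immediate since integral closedness is one of the defining conditions of a Dedekind domain. For $(1)\Rightarrow(2)$: $\mathcal{O}_K$ is integral over $\mathbb{Z}$, hence over $R$, and lies in $K=\mathrm{Frac}(R)$, so if $R$ is integrally closed then $\mathcal{O}_K\subseteq R$; together with $R\subseteq\mathcal{O}_K$ this yields $R=\mathcal{O}_K$.

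Next, $(3)\Rightarrow(4)$ is the standard theorem that in a Dedekind domain every nonzero ideal is invertible; its core is the identity $\mathfrak{p}\mathfrak{p}^{-1}=R$ for a nonzero prime $\mathfrak{p}$, where $\mathfrak{p}^{-1}=\{x\in K : x\mathfrak{p}\subseteq R\}$, proved using that $R$ is Noetherian, one-dimensional and integrally closed, after which one removes any ``non-invertible'' ideals by Noetherian induction. For $(4)\Rightarrow(5)$: a class in $C(R)$ is $[I]$ for a nonzero integral ideal $I$; invertibility of $I$ means there is a fractional ideal $\mathfrak{b}$ with $I\mathfrak{b}=R$, and after clearing a denominator $\mathfrak{b}=\gamma J$ with $J\subseteq R$ a nonzero ideal and $\gamma\in K^\times$, so $IJ=\gamma^{-1}R$ and hence $[I][J]=[R]$; thus every element of the monoid $C(R)$ has an inverse and $C(R)$ is a group. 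Conversely, if $C(R)$ is a group and $\alpha IJ=\beta R$ witnesses $[I]^{-1}=[J]$, then the fractional ideal $\alpha\beta^{-1}J$ inverts $I$, giving $(5)\Rightarrow(4)$. Finally, $(4)\Rightarrow(1)$: if $\alpha\in K$ is integral over $R$, then $M=R[\alpha]$ is a finitely generated $R$-submodule of $K$, hence a nonzero fractional ideal, and $MM=M$ because $M$ is a ring; multiplying by the inverse of $M$ forces $M=R$, so $\alpha\in R$. This closes $(1)\Rightarrow(2)\Rightarrow(3)\Rightarrow(4)\Rightarrow(1)$ together with $(4)\Leftrightarrow(5)$.

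The genuine content all sits in the imported classical facts—that $\mathcal{O}_K$ is a Dedekind domain and that every nonzero ideal of a Dedekind domain is invertible (equivalently the identity $\mathfrak{p}\mathfrak{p}^{-1}=R$)—so in the write-up these would simply be cited to Stevenhagen, Sections 6--7; the remaining work is the formal bookkeeping between integral ideals modulo the relation $\approx$ used to define $C(R)$ and honest fractional ideals, plus the cancellation trick for $(4)\Rightarrow(1)$. One wrinkle to flag is the phrase ``every ideal'' in $(4)$: the zero ideal is never invertible, so $(4)$ must be read as ``every nonzero ideal,'' which is consistent with $\mathcal{I}(R)$ consisting of the nonzero ideals in the definition of $C(R)$.
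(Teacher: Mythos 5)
Your proof is correct. The paper itself gives no proof of this theorem --- it is imported verbatim from Stevenhagen, Sections 6--7 --- and your argument is exactly the standard one found there: the preliminary observations that an order is automatically Noetherian, one-dimensional, and contained in $\mathcal{O}_K$ reduce everything to integral closedness, the cycle $(1)\Rightarrow(2)\Rightarrow(3)\Rightarrow(4)\Rightarrow(1)$ together with $(4)\Leftrightarrow(5)$ covers all the equivalences, and the bookkeeping translating invertibility of fractional ideals into invertibility in the monoid $C(R)$ (as the paper defines it via the relation $\approx$) is handled correctly. Your caveat that ``every ideal'' in $(4)$ must be read as ``every nonzero ideal,'' consistent with $\mathcal{I}(R)$ in the definition of $C(R)$, is apt.
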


Dedekind-Kummer theorem gives us characterization when ideals of the form $\langle \theta_n-c,p\rangle$ are invertible when $p$ is a prime. 	
	
	\begin{proposition}[{\cite[Proposition 4.7.]{Kim-Yamada:2017-1}}]\label{proposition:invertible-prime}Suppose that integers $c$, $n$ and $p$ satisfy $f_n(c)\equiv 0 \Mod {p}$. If $p$ is prime, then $\langle \theta_n-c,p\rangle$ is a prime ideal of $\mathbb{Z}[\theta_n]$. The ideal $\langle \theta_n-c,p\rangle$ is invertible 
	 if and only if at least one of the following conditions holds.
	\begin{enumerate}
	\item $c$ is a simple root of $f_n(x)$ modulo $p$. 
	\item $p^2$ does not divide $f_n(c)$.
	\end{enumerate}
	\end{proposition}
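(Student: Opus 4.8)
The statement splits into a primality claim and an invertibility criterion, and I would handle them separately, dispatching the first quickly. For primality, I would just compute the quotient: since $f_n$ is the monic irreducible minimal polynomial of $\theta_n$, we have $\mathbb{Z}[\theta_n]\cong\mathbb{Z}[x]/(f_n)$, hence $\mathbb{Z}[\theta_n]/\langle\theta_n-c,p\rangle\cong\mathbb{Z}[x]/(f_n,x-c,p)\cong\mathbb{F}_p[x]/(\overline{f_n},x-c)$, where $\overline{f_n}$ is the reduction of $f_n$ modulo $p$. The hypothesis $f_n(c)\equiv 0\Mod{p}$ says exactly that $x-c$ divides $\overline{f_n}$ in $\mathbb{F}_p[x]$, so this quotient collapses to $\mathbb{F}_p[x]/(x-c)\cong\mathbb{F}_p$. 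Being a field, $\langle\theta_n-c,p\rangle$ is a maximal ideal, hence prime; I also record that its residue field is $\mathbb{F}_p$, which I will use below.

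For invertibility, write $R=\mathbb{Z}[\theta_n]$ and $\mathfrak{p}=\langle\theta_n-c,p\rangle$, and pass to the localization $R_{\mathfrak{p}}$. Because $\mathfrak{p}$ is maximal it is contained in no other prime, so $\mathfrak{p}$ is trivially locally principal at every prime other than $\mathfrak{p}$; hence $\mathfrak{p}$ is invertible if and only if $\mathfrak{p}R_{\mathfrak{p}}$ is principal. Since $R$ is an order in $\mathbb{Q}(\theta_n)$ it has Krull dimension $1$, so $R_{\mathfrak{p}}$ is a one-dimensional Noetherian local domain with residue field $\mathbb{F}_p$, and by Nakayama's lemma $\mathfrak{p}R_{\mathfrak{p}}$ is principal exactly when $\dim_{\mathbb{F}_p}\mathfrak{p}/\mathfrak{p}^2=1$ (and this dimension is always at least $1$ because $\mathfrak{p}\neq\mathfrak{p}^2$). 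So the whole problem reduces to computing the cotangent space $\mathfrak{p}/\mathfrak{p}^2$.

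To do that I would set $u=\theta_n-c$, so $R=\mathbb{Z}[u]$ and $u$ has minimal polynomial $f_n(x+c)=x^3+Ax^2+Bx+C$, where by Taylor expansion $C=f_n(c)$, $B=f_n'(c)$, $A=\tfrac12 f_n''(c)$ are integers and the hypothesis says $p\mid C$; write $C=pC'$. Then $\mathfrak{p}=\langle p,u\rangle$ and $\mathfrak{p}^2=\langle p^2,pu,u^2\rangle$, so $\mathfrak{p}/\mathfrak{p}^2$ is generated over $\mathbb{F}_p$ by the classes of $p$ and $u$. Reducing the relation $u^3+Au^2+Bu+C=0$ modulo $\mathfrak{p}^2$ (where $u^2\equiv 0$) leaves $Bu+C\equiv 0$, a single $\mathbb{F}_p$-linear relation between the two generators with coefficient $\overline{B}$ on $u$ and $\overline{C'}$ on $p$ (using $C=pC'$). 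After checking, and this is the delicate point, that no further relation survives --- for instance $u\cdot(Bu+C)=Cu$ lies in $\mathbb{Z}\cdot pu$ precisely because $p\mid C$, so it contributes nothing new --- one obtains $\mathfrak{p}/\mathfrak{p}^2\cong(\mathbb{Z}/p)^2/\langle(\overline{C'},\overline{B})\rangle$. Hence $\dim_{\mathbb{F}_p}\mathfrak{p}/\mathfrak{p}^2$ is $1$ unless $\overline{C'}=\overline{B}=0$, in which case it is $2$; that is, $\mathfrak{p}$ fails to be invertible exactly when $p^2\mid f_n(c)$ and $p\mid f_n'(c)$. Since $p\mid f_n(c)$, the integer $c$ is a simple root of $\overline{f_n}$ if and only if $p$ does not divide $f_n'(c)$, so this is exactly the statement that $\mathfrak{p}$ is invertible if and only if $c$ is a simple root of $\overline{f_n}$ or $p^2$ does not divide $f_n(c)$ --- the claim.

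I expect the only real obstacle to be the computation of $\mathfrak{p}/\mathfrak{p}^2$ above: one has to be sure its presentation really is the $2\times 2$ one stated, i.e.\ that after reducing $\mathbb{Z}[u]$ modulo $u^2$ the ideal $\mathfrak{p}^2$ is generated as a $\mathbb{Z}$-module by just $p^2$, $pu$ and $Bu+C$, with no extra generator --- from multiplying these by $u$, or from $u^4$ --- surviving. This is exactly the place where the hypothesis $p\mid f_n(c)$ is needed and where the bookkeeping of $A,B,C$ must be done carefully; the surrounding commutative algebra (invertible $\Leftrightarrow$ $\mathfrak{p}R_{\mathfrak{p}}$ principal $\Leftrightarrow$ $\dim\mathfrak{p}/\mathfrak{p}^2=1$) is standard and in line with the machinery of Theorem~\ref{theorem:Dedekind}.
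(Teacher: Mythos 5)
Your argument is correct, but it is worth noting that the paper does not prove this statement at all: it is imported verbatim from Kim--Yamada (their Proposition 4.7), and the surrounding text signals that the intended justification there runs through the Dedekind--Kummer theorem and Dedekind's criterion for the order $\mathbb{Z}[\theta_n]$ to be maximal at $p$ (cf.\ the citation of \cite{Stevenhagen:2008-1} in Theorem~\ref{theorem:Dedekind}). What you give instead is a self-contained local argument: reduce invertibility to principality of $\mathfrak{p}R_{\mathfrak{p}}$ (legitimate, since $\mathfrak{p}$ is maximal and hence locally trivial elsewhere), then to $\dim_{\mathbb{F}_p}\mathfrak{p}/\mathfrak{p}^2=1$ via Nakayama, and finally compute the cotangent space by hand after the substitution $u=\theta_n-c$. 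I checked the one step you flag as delicate: with $R=\mathbb{Z}\oplus\mathbb{Z}u\oplus\mathbb{Z}u^2$, the $\mathbb{Z}$-module $\mathfrak{p}^2$ is spanned by $u^i\cdot\{p^2,pu,u^2\}$ for $i=0,1,2$, and modulo $\mathbb{Z}u^2+\mathbb{Z}pu$ the only surviving contributions are $p^2$, $C+Bu$, and $u^4\equiv A(C+Bu)-Cu\equiv A(C+Bu)$ (using $p\mid C$), so indeed $\mathfrak{p}^2=\mathbb{Z}p^2+\mathbb{Z}pu+\mathbb{Z}u^2+\mathbb{Z}(C+Bu)$ and your presentation $\mathfrak{p}/\mathfrak{p}^2\cong\mathbb{F}_p^2/\langle(\overline{C'},\overline{B})\rangle$ is right; the non-invertibility condition $p\mid f_n'(c)$ and $p^2\mid f_n(c)$ follows, which is exactly the negation of (1)-or-(2). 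Your route trades the global Dedekind-criterion machinery for elementary commutative algebra and makes explicit exactly where the hypothesis $p\mid f_n(c)$ enters; the cost is the $\mathbb{Z}$-module bookkeeping, which you correctly identify and which does close. One cosmetic remark: your computation applies verbatim to any monic cubic (indeed any degree, with minor changes), so nothing here uses the specific shape of $f_n$.
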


    \begin{remark}[{\cite[Remark 4.5.]{Kim-Yamada:2017-1}}]\label{remark:decomposition}
	Suppose that  $p$ and $q$ are relatively prime integers. Then $\theta_n-c$ is a linear combination of $p(\theta_n-c)$ and $q(\theta_n-c)$. It follows that
	\[\langle \theta_n-c,p\rangle \langle \theta_n-c,q\rangle=\langle (\theta_n-c)^2, p(\theta_n-c),q(\theta_n-c),pq\rangle =\langle \theta_n-c,pq\rangle.\]
	More generally, consider the prime factorization $d=p_1^{e_1}\cdots p_m^{e_m}$. Then 
	 \[\langle \theta_n-c,d\rangle =\langle \theta_n-c,p_1^{e_1}\rangle\langle \theta_n-c,p_2^{e_2}\rangle \cdots \langle \theta_n-c,p_m^{e_m}\rangle.\]
\end{remark}	
All representatives of ideal classes in $C(\ztheta{n})$ can be written as $\langle \theta_n-c,d\rangle$ by Corollary~\ref{corollary:Aitchison-Rubinstein}. Remark~\ref{remark:decomposition} shows that we can judge $\langle \theta_n-c,d\rangle$ is invertible if we know all ideal class with $d=p^e$ is invertible or not.

	The following proposition gives us a condition when $\langle \theta_n-c,p^e\rangle$ is invertible.
	
	\begin{proposition}[{\cite[Proposition 4.8]{Kim-Yamada:2017-1}}]\label{proposition:invertible-primepower}Suppose that $p$ is a prime integer and an integer $c$ satisfies $f_n(c)\equiv 0\Mod {p^k}$ for some positive integer $k$. 
	\begin{enumerate}
	\item If $\langle \theta_n-c,p\rangle $ is invertible, then $\langle \theta_n-c,p^k\rangle$ is invertible. 
	\item If $f_n(c)\not\equiv 0\Mod{p^{k+1}}$, then $\langle \theta_n-c,p^k\rangle$ is invertible.
	\end{enumerate}
	\end{proposition}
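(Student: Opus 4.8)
The plan is to prove both statements in one stroke by producing, for the ideal $I=\langle \theta_n-c,p^k\rangle$, an explicit companion ideal $J$ with $IJ$ a nonzero \emph{principal} ideal; since a principal ideal represents the identity class in $C(\mathbb{Z}[\theta_n])$, this exhibits $[I]$ as an invertible element directly from the definition of the ideal class monoid. The engine will be the polynomial
\[
g(x):=\frac{f_n(x)-f_n(c)}{x-c}=x^2+(c-n)x+(c^2-nc+n-1)\in\mathbb{Z}[x],
\]
which satisfies $g(c)=f_n'(c)$ and, because $f_n(\theta_n)=0$, the identity $(\theta_n-c)\,g(\theta_n)=-f_n(c)$ in $\mathbb{Z}[\theta_n]$. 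Writing $f_n(c)=p^m u$ with $p\nmid u$ — note $f_n(c)\neq 0$ by Lemma~\ref{lemma:irr} and $m\geq k$ since $f_n(c)\equiv 0\Mod{p^k}$ — a direct multiplication of generators gives
\[
\langle \theta_n-c,\,p^k\rangle\cdot\langle g(\theta_n),\,p^k\rangle=\langle -p^m u,\ p^k(\theta_n-c),\ p^k g(\theta_n),\ p^{2k}\rangle=\langle p^k\rangle\cdot J_0,
\]
where $J_0:=\langle p^{\,m-k}u,\ \theta_n-c,\ g(\theta_n),\ p^k\rangle$. So everything reduces to showing $J_0=\mathbb{Z}[\theta_n]$ under each hypothesis.

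For part (2) this is immediate: the two hypotheses $f_n(c)\equiv 0\Mod{p^k}$ and $f_n(c)\not\equiv 0\Mod{p^{k+1}}$ say exactly that $m=k$, whence $p^{\,m-k}u=u$, and $\gcd(u,p^k)=1$ forces $1\in\langle u,p^k\rangle\subseteq J_0$. The same computation handles part (1) whenever $p^2\nmid f_n(c)$, because then $m=1$ and hence $k=1$, so the assertion is just the hypothesis. It therefore remains to treat part (1) when $p^2\mid f_n(c)$. Here the assumption that $\langle \theta_n-c,p\rangle$ is invertible, together with Proposition~\ref{proposition:invertible-prime}, forces $c$ to be a simple root of $f_n$ modulo $p$, i.e. $g(c)=f_n'(c)\not\equiv 0\Mod{p}$. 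Now $\langle \theta_n-c,p\rangle$ is a maximal ideal with $\mathbb{Z}[\theta_n]/\langle \theta_n-c,p\rangle\cong\mathbb{F}_p$ (reduce $\mathbb{Z}[x]/\langle f_n,x-c,p\rangle$ modulo $x-c$, using $p\mid f_n(c)$), so $g(\theta_n)$ reduces to $f_n'(c)\neq 0$ there and hence $g(\theta_n)\notin\langle \theta_n-c,p\rangle$. Since any maximal ideal containing both $p$ and $\theta_n-c$ must contain $\langle \theta_n-c,p\rangle$ and hence equal it, and that ideal does not contain $g(\theta_n)$, the ideal $\langle \theta_n-c,g(\theta_n)\rangle+\langle p\rangle$ lies in no maximal ideal, i.e. equals $\mathbb{Z}[\theta_n]$. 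Raising a relation $a+yp=1$ with $a\in\langle \theta_n-c,g(\theta_n)\rangle$ to the $k$-th power then yields $\langle \theta_n-c,g(\theta_n)\rangle+\langle p^k\rangle=\mathbb{Z}[\theta_n]$, so $J_0=\mathbb{Z}[\theta_n]$.

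In every case we obtain $\langle \theta_n-c,p^k\rangle\cdot\langle g(\theta_n),p^k\rangle=\langle p^k\rangle$, a nonzero principal ideal, so $[\langle \theta_n-c,p^k\rangle]$ is invertible in $C(\mathbb{Z}[\theta_n])$ with inverse $[\langle g(\theta_n),p^k\rangle]$. I expect the only genuinely delicate step to be the last case of part (1): one must read the simple‑root condition off Proposition~\ref{proposition:invertible-prime}, verify that $\langle \theta_n-c,p\rangle$ is maximal and is the unique maximal ideal lying over $p$ that contains $\theta_n-c$ (this is where uniqueness of the linear factor $x-c$ of $f_n$ mod $p$ enters), and then upgrade comaximality with $\langle p\rangle$ to comaximality with $\langle p^k\rangle$. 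The remainder is routine ideal arithmetic flowing from the identity $(\theta_n-c)\,g(\theta_n)=-f_n(c)$, and requires no localization or ramification theory beyond what is already recorded above.
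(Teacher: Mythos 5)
Your argument is correct and complete. Note that the paper itself gives no proof of this proposition---it is quoted verbatim from Kim--Yamada---so there is no internal proof to compare against; what you have written is a self-contained, elementary verification. The identity $(\theta_n-c)\,g(\theta_n)=-f_n(c)$ with $g(x)=(f_n(x)-f_n(c))/(x-c)\in\mathbb{Z}[x]$ is exactly the right engine: it reduces both parts to showing that $J_0=\langle p^{m-k}u,\ \theta_n-c,\ g(\theta_n),\ p^k\rangle$ is the unit ideal, and your case analysis does this correctly. In part (2) the hypotheses give $m=k$, and coprimality of $u$ and $p^k$ in $\mathbb{Z}$ finishes it. In part (1) the only nontrivial case is $p^2\mid f_n(c)$, where Proposition~\ref{proposition:invertible-prime} forces $c$ to be a simple root of $f_n$ modulo $p$, hence $g(c)=f_n'(c)\not\equiv 0\pmod{p}$, so $g(\theta_n)$ is nonzero in the residue field $\mathbb{Z}[\theta_n]/\langle\theta_n-c,p\rangle\cong\mathbb{F}_p$; your comaximality-plus-binomial argument then correctly upgrades $\langle\theta_n-c,g(\theta_n)\rangle+\langle p\rangle=\mathbb{Z}[\theta_n]$ to the same statement with $p^k$ in place of $p$. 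One small remark: the ``uniqueness of the linear factor $x-c$ of $f_n$ mod $p$'' that you flag as potentially delicate is not actually needed---all you use is that $\langle\theta_n-c,p\rangle$ is maximal, so that any maximal ideal containing both $\theta_n-c$ and $p$ equals it and therefore cannot contain $g(\theta_n)$. The resulting equation $\langle\theta_n-c,p^k\rangle\cdot\langle g(\theta_n),p^k\rangle=\langle p^k\rangle$ exhibits invertibility directly from the paper's definition of the ideal class monoid, which is exactly what is required.
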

	
	\subsection{When \texorpdfstring{$C(\ztheta{n})$}{C(Z[theta])} is not a group}\label{subsection:49k+27}
	In this subsection, we give the necessary and sufficient conditions for $C(\ztheta{n})$ not to be a group. We give non-invertible ideals explicitly. 
	 
     \begin{example}\label{example:example1}
    By Theorem~\ref{theorem:Dedekind}, we can check whether $C(\ztheta{n})$ is a group or not for given $n$. For example, SageMath has a command to judge if a given ring $R$ is integrally closed or not \cite{Sage:2019-1}. According to SageMath, $C(\ztheta{n})$ is not a group for following $n$ if $0 \leq n \leq 1000$. 
    \begin{equation*}
    \begin{split}
    n=&27, 76, 94, 125, 127, 159, 167, 174, 223, 235, 272, \\
    &284, 299, 321, 370, 416, 419, 440, 456, 468, 517, 566, \\
    &615, 623, 664, 705, 713, 745, 762, 764, 807, 811, 828, \\
    &860, 909, 958, 969, 975, 994
	\end{split}
	\end{equation*}
    
	And we give the SageMath code. 
	
\hrulefill    
\begin{Verbatim}
for k in range(1000):
    O.<theta> = EquationOrder(x^3-k*x^2+(k-1)*x-1)
    if not O.is_integrally_closed():  
        print(k)
\end{Verbatim}
\hrulefill    
    \end{example}
     
    By combining the fact in subsection~\ref{subsection:Dedekind-Kummer}, we can characterize $n$ such that $C(\ztheta{n})$ is not a group.
    
    \begin{theorem}\label{theorem:A}
    Let $\theta_n$ be a root of $f_n(x) = x^3 - n x^2 +(n-1) x - 1$.
    $C(\ztheta{n})$ is not a group if and only if there exist an integer $c$ and a prime number $p$ which satisfy the following simultaneous congruence equations.
    \begin{enumerate}
		\item $(2c-1) n \equiv 3 c^2 -1 \Mod{p}$
        \item $(c^2 - c) n \equiv c^3 - c - 1 \Mod{p^2}$
    
	\end{enumerate}
    Moreover, $\langle \theta_n-c,p \rangle$ is a non-invertible ideal in $C(\ztheta{n})$ when $c,p$ satisfies the above equations.
    \end{theorem}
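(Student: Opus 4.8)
The plan is to combine the dictionary "$C(\ztheta{n})$ is a group $\iff$ every ideal of $\ztheta{n}$ is invertible" from Theorem~\ref{theorem:Dedekind} with the explicit invertibility criteria of Propositions~\ref{proposition:invertible-prime} and~\ref{proposition:invertible-primepower}, and then to translate the resulting conditions on $f_n$ and $f_n'$ into the two stated congruences by a short computation.

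First I would reduce to ideals of the shape $\langle\theta_n-c,p\rangle$ with $p$ prime. By Theorem~\ref{theorem:Dedekind}, $C(\ztheta{n})$ fails to be a group exactly when $\ztheta{n}$ has a non-invertible ideal. By Corollary~\ref{corollary:Aitchison-Rubinstein} every ideal class of $\ztheta{n}$ is represented by some $\langle\theta_n-c,d\rangle$ with $f_n(c)\equiv 0\Mod{d}$, and Remark~\ref{remark:decomposition} factors $\langle\theta_n-c,d\rangle$ as a product of the ideals $\langle\theta_n-c,p^{e}\rangle$ over the prime powers $p^{e}$ exactly dividing $d$; since a product in a commutative monoid can be invertible only if each factor is, a non-invertible ideal forces some $\langle\theta_n-c,p^{e}\rangle$ to be non-invertible. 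Proposition~\ref{proposition:invertible-primepower}(1) then pushes this down to $e=1$: if $\langle\theta_n-c,p\rangle$ were invertible, so would be $\langle\theta_n-c,p^{e}\rangle$. One also observes that $f_n(c)\equiv 0\Mod{p}$ must hold, since otherwise $f_n(c)$ is a unit modulo $p$ and $1\in\langle\theta_n-c,p\rangle$. Hence $C(\ztheta{n})$ is not a group if and only if there exist a prime $p$ and an integer $c$ with $f_n(c)\equiv 0\Mod{p}$ for which $\langle\theta_n-c,p\rangle$ is not invertible.

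Next I would apply Proposition~\ref{proposition:invertible-prime}: with $f_n(c)\equiv 0\Mod{p}$, the ideal $\langle\theta_n-c,p\rangle$ is non-invertible precisely when both of its sufficient conditions for invertibility fail, that is, when $c$ is a multiple root of $f_n$ modulo $p$ and $p^2\mid f_n(c)$. A multiple root modulo $p$ means $f_n(c)\equiv 0$ and $f_n'(c)\equiv 0\Mod{p}$, and the first of these is already implied by $p^2\mid f_n(c)$; so non-invertibility of $\langle\theta_n-c,p\rangle$ is equivalent to the pair $f_n'(c)\equiv 0\Mod{p}$ and $f_n(c)\equiv 0\Mod{p^2}$. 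Finally, using $f_n'(x)=3x^2-2nx+(n-1)$ and $f_n(x)=x^3-nx^2+(n-1)x-1$ and solving each congruence for the terms containing $n$ yields exactly $(2c-1)n\equiv 3c^2-1\Mod{p}$ and $(c^2-c)n\equiv c^3-c-1\Mod{p^2}$, which are the two equations in the statement; the "moreover" clause is this same equivalence read in the reverse direction. Most of the argument is bookkeeping with the cited results, so no single step is deep; the one place requiring care is the reduction to prime modulus, i.e.\ justifying that it suffices to test the ideals $\langle\theta_n-c,p\rangle$, which rests on Corollary~\ref{corollary:Aitchison-Rubinstein} covering every ideal class together with the multiplicativity in Remark~\ref{remark:decomposition} and Proposition~\ref{proposition:invertible-primepower}(1). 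The only genuinely computational part, the rearrangement of $f_n(c)$ and $f_n'(c)$, is routine.
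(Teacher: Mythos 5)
Your proposal is correct and follows essentially the same route as the paper: reduce via Theorem~\ref{theorem:Dedekind}, Corollary~\ref{corollary:Aitchison-Rubinstein}, Remark~\ref{remark:decomposition} and Proposition~\ref{proposition:invertible-primepower} to the single prime ideal $\langle\theta_n-c,p\rangle$, characterize its non-invertibility by Proposition~\ref{proposition:invertible-prime} as ``$f_n'(c)\equiv 0\Mod{p}$ and $f_n(c)\equiv 0\Mod{p^2}$,'' and rearrange to the two stated congruences. If anything, you spell out the monoid-theoretic step (a product is invertible iff each factor is) more explicitly than the paper does.
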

    
    \begin{proof}
    We assume $C(\ztheta{n})$ is not a group. There exists an element in $C(\ztheta{n})$ which is not invertible. We can write the element as $\langle \theta_n -c, d \rangle$ by Corollary~\ref{corollary:Aitchison-Rubinstein}. $d|f_n(c)$ holds by the property of standard CS matrices. Besides, Remark~\ref{remark:decomposition} says that there exist $\langle \theta_n -c, p^k \rangle$ which is not invertible and $p^k | f_n(c)$ holds where $p$ is a prime number. By Proposition~\ref{proposition:invertible-primepower}, there exists $\langle \theta_n -c, p \rangle$ which is not invertible and $p | f_n(c)$ holds. $c$ is a multiple root of $f_n(x)$ $\Mod{p}$ and $p^2 | f_n(c)$ from Proposition~\ref{proposition:invertible-prime}. We use the fact that $c$ is a multiple root of $f_n(x)$ $\Mod{p}$ if and only if $f_n^\prime(c) \equiv 0 \Mod{p}$. Then, we get $(2c-1) n \equiv 3 c^2 -1 \Mod{p}$ and $(c^2 - c) n \equiv c^3 - c - 1 \Mod{p^2}$.
    
    Conversely, we assume that there exist an integer $c$ and a prime number $p$ which satisfy the following simultaneous congruence equations.
    \begin{enumerate}
		\item $(2c-1) n \equiv 3 c^2 -1 \Mod{p}$.
        \item $(c^2 - c) n \equiv c^3 - c - 1 \Mod{p^2}$.
    \end{enumerate}
    We use the fact that $c$ is a multiple root of $f_n(x)$ $\Mod{p}$ if and only if $f_n^\prime(c) \equiv 0 \Mod{p}$. Then, the following conditions hold.
    	\begin{enumerate}
		\item $c$ is a multiple root of $f_n(x)$ $\Mod{p}$.
        \item $p^2 | f_n(c)$.
	\end{enumerate}
	By Proposition~\ref{proposition:invertible-primepower}, $\langle \theta_n -c, p \rangle$ is not invertible and $p | f_n(c)$ hold. This means that $C(\ztheta{n})$ is not a group.
    \end{proof}
    
	In \cite{Kim-Yamada:2017-1}, Kim and Yamada proved that $C(\mathbb{Z}[\theta_{49k+27}])$ is not a group for any integer $k$. We find that this ``arithmetic sequence'' structure is not special. The following corollary holds.
    
    \begin{corollary}\label{corollary:p2}
    If $C(\ztheta{n_0})$ is not a group, then there exists a prime number $p$ such that $C(\ztheta{n_0 + p^2 k})$ is not a group for any integer $k$.
    \end{corollary}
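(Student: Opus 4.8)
The plan is to invoke Theorem~\ref{theorem:A} directly and observe that its two congruence conditions are insensitive to shifting the trace by a multiple of $p^2$, so that a single prime works uniformly along the arithmetic progression $n_0 + p^2\mathbb{Z}$.

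First, since $C(\ztheta{n_0})$ is not a group, Theorem~\ref{theorem:A} supplies an integer $c$ and a prime number $p$ such that $(2c-1)n_0 \equiv 3c^2-1 \Mod{p}$ and $(c^2-c)n_0 \equiv c^3-c-1 \Mod{p^2}$. I would fix this $c$ and this $p$ once and for all. Next, for an arbitrary integer $k$, substitute $n_0 + p^2 k$ in place of $n_0$: the first left-hand side becomes $(2c-1)(n_0+p^2k) = (2c-1)n_0 + (2c-1)p^2k$, and $(2c-1)p^2k \equiv 0 \Mod{p}$; the second left-hand side becomes $(c^2-c)(n_0+p^2k) = (c^2-c)n_0 + (c^2-c)p^2k$, and $(c^2-c)p^2k \equiv 0 \Mod{p^2}$. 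Hence both congruences still hold with $n_0$ replaced by $n_0 + p^2 k$.

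Finally, I would apply the converse direction of Theorem~\ref{theorem:A} to the trace $n_0 + p^2 k$ with this same $c$ and $p$ to conclude that $C(\ztheta{n_0 + p^2 k})$ is not a group; since $k$ was arbitrary, this proves the corollary. (As a byproduct, $\langle \theta_{n_0 + p^2 k}-c,\, p\rangle$ is an explicit non-invertible ideal for every $k$.)

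There is essentially no obstacle once Theorem~\ref{theorem:A} is available: the only thing to notice is that the moduli appearing in the two conditions are exactly $p$ and $p^2$, which are precisely the divisors of the shift $p^2 k$, so the perturbation is absorbed without changing either congruence. The one point worth stating carefully in the writeup is that the \emph{same} prime $p$ — the one extracted from the failure of the group property at $n_0$ — serves for all $k$ simultaneously, so no new prime needs to be produced for each term of the progression.
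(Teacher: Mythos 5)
Your proposal is correct and follows exactly the paper's own argument: extract $c$ and $p$ from Theorem~\ref{theorem:A} applied to $n_0$, note that the shift $p^2 k$ vanishes modulo $p$ and modulo $p^2$ respectively in the two congruences, and apply the converse direction of Theorem~\ref{theorem:A} to each $n_0 + p^2 k$. Nothing to add.
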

    
    \begin{proof}
    We assume $C(\ztheta{n_0})$ is not a group. Then, there exist $c\in \mathbb{Z}$ and a prime number $p$ such that $(2c-1) n_0 \equiv 3 c^2 -1 \Mod{p}$ and $(c^2 - c) n_0 \equiv c^3 - c - 1 \Mod{p^2}$. In this situation, for any integer $k$, $(2c-1) (n_0 + p^2 k) \equiv 3 c^2 -1 \Mod{p}$ and $(c^2 - c) (n_0 + p^2 k) \equiv c^3 - c - 1 \Mod{p^2}$ holds. This means $C(\ztheta{n_0 + p^2 k})$ is not a group by the Theorem~\ref{theorem:A}.
    \end{proof}

\begin{example}\label{example:example2}
	For example, the following tuples is first 10 solutions of the equations in ascending order for $p$. By Theorem~\ref{theorem:A}, $\langle \theta_{n_0 + p^2 k} - c, p \rangle$ is a non-invertible ideal.
    \begin{equation*}
    \begin{split}
    (c,p,n_0)=&(2, 7, 27), (13, 17, 127), (11, 17, 167), (19, 23, 235), (11, 23, 440),\\
	&(10, 23, 299), (8, 23, 94), (29, 31, 159), (11, 31, 807), (22, 41, 1402) \\ 
	\end{split}
	\end{equation*}
    And the following $n_k$ is arithmetic sequences corresponding to the above tuples. For the folloiwng $n_k$ and for any integer $k$, $C(\ztheta{n_k})$ is not a group.
    \begin{equation*}
    \begin{split}
    n_k=&7^2k +27, 17^2k + 127, 17^2k + 167, 23^2k + 235, 23^2k + 440,\\
    &23^2k + 299, 23^2k + 94, 31^2k + 159, 31^2k + 807, 41^2k + 1402
	\end{split}
	\end{equation*}

The following SageMath code gives us $(c,p,n_0,i)$ in ascending order for $p$, where $\langle \theta_{n_0} - c, p  \rangle$ is a representative of non-invertible ideal classes and $i$ is an index.

\hrulefill    
\begin{Verbatim}
i=1
for p in Primes():
    R.<c>=Integers(p)[]
    f = c^4-2*c^3+c^2+2*c-1
    L = [(-1)*Integer(list(l[0])[0]) for l in list(f.factor()) \
    if l[0].degree() == 1]
    for c0 in L:
        g = (Integers(p)(2*c0-1))*c-Integers(p)(3*c0^2-1)
       
        n0 = (-1)*Integer(list(g.factor())[0][0].list()[0])
        
        N = c0^3-c0-1-n0*(c0^2-c0)
        if N % p == 0:
            h = Integers(p)(c0^2-c0)*c-Integers(p)(N/p)
            n1 = (-1)*Integer(list(h.factor())[0][0].list()[0])
            n = n1*p+n0
            if ((2*c0-1)*n-3*c0^2+1)%p == 0 \
            and (c0^3-n*c0^2+(n-1)*c0-1)%(p^2) == 0:
                print((Integers(p)(c0),p,Integers(p^2)(n),i))
                i=i+1
\end{Verbatim}
\hrulefill

\end{example}

\section{Finding representatives of ideal classes}\label{section:representatives}
	In this section, we calculate representatives of elements of $C(\mathbb{Z}[\theta_n])$ to give specific CS spheres to consider. And we judge one by one whether the CS sphere is diffeomorphic to standard 4-sphere or not.

	Ideal classes usually have many representatives. In order to calculate the representatives, it is useful to decide which representative to choose.
    
	Let $x = \langle \theta_n-c, d \rangle$ be an element of $C(\mathbb{Z}[\theta_n])$. By Remark~\ref{remark:d0}, $d \neq 0$. And by Remark~\ref{remark:changenbyc}, we can select the representative such that $1 \leq c \leq d$. We define an order in such representatives of $x$, $(c,d,n) \leq (c^\prime, d^\prime, n)$ if either $d < d^\prime$ or $d = d^\prime$ and $c \leq c^\prime$. We say $(c,d,n)$ is the \textbf{minimal representative} of $x$ if $\langle \theta_n-c, d \rangle$ is the representative of $x$ and minimal with respect to this order. We use a software MAGMA \cite{MAGMA} to give the minimal representatives for a given trace.
\subsection{Representatives of ideal classes for given trace \texorpdfstring{$n$}{n}}\label{subsection:algorithm}
    In this subsection, we give MAGMA codes which show all minimal representatives for a given trace $n$. Kim and Yamada implemented the algorithm when $C(\mathbb{Z}[\theta_n])$ is a group \cite[Section 5.2.]{Kim-Yamada:2017-1}. We improved that program by using Marseglia's result \cite{Marseglia:2019-1} so that the algorithm works even though $C(\mathbb{Z}[\theta_n])$ is not a group. 
    In order to use the following program, one must download the MAGMA system to one's PC and attach the Marseglia's program to the MAGMA system. Marseglia's algorithm computing ICM is distributed in github 
    \url{https://github.com/stmar89/AbVarFq}. We refer the reader to 
    \url{https://magma.maths.usyd.edu.au/magma/handbook/text/24#185} on how to attach the program. If $C(\mathbb{Z}[\theta_n])$ is a group, you can compute representatives in SageMath.

\hrulefill
\begin{Verbatim}
n := 76;
R<x>:=PolynomialRing(Integers());
f:=x^3-n*x^2+(n-1)*x-1;
A:=AssociativeAlgebra(f);
E:=EquationOrder(A);
E_icm:=ICM(E);
C:=# E_icm;
X:=ZeroMatrix(IntegerRing(),2,C);

"There are", C, "similarity classes of trace", n,
"Cappell-Shaneson matrices.";

function LMT_ICM(c,d,n)
  k:=Evaluate(f,c);
  if IsDivisibleBy(k,d) eq false then 
    return "there is not a standard CS matrix corresponding
    to c,d,n";
  else
    Y:= Matrix(IntegerRing(), 3, 3,
    [0,(k/(-d)),((c-1)*(n-c-1)),0,c,d,1,0,n-c]);
    bas1:=MatrixToIdeal(A,Y);
    return ideal<E|bas1>;
  end if;
end function;


print [1,1,n];

i:=1;
d:=1;
while i lt C do
  for c in [1 .. d] do
    if IsDivisibleBy(Evaluate(f,c),d) eq true then
      I:=LMT_ICM(c,d,n);
      if IsPrincipal(I) ne true then
			
        if i ne 1 then
          IsSame := false;
          for j in [1 .. (i-1)] do
						
            if IsIsomorphic2(I,LMT_ICM(X[1][j],X[2][j],n)) then
              IsSame:=true; break;
            end if;
          end for;
          if IsSame eq false then
            X[1][i] := c; X[2][i] := d; i+:=1; print [c,d,n];
          end if;
        end if;
        if i eq 1 then
          X[1][i] := c; X[2][i] := d; i+:=1; print [c,d,n];
        end if;
      end if;
    end if;
  end for;
  d+:=1;
end while;
\end{Verbatim}
\hrulefill

		\subsection{Other representatives for given ideal classes}\label{subsection:algorithm2}
In this subsection, we give a program to find (not only minimal) representatives for given element $[\langle \theta_n - c, d \rangle]$. This program is almost the same as \cite[Section 5.2.]{Kim-Yamada:2017-1} except for using Marseglia's program.

\clearpage
\hrulefill
\begin{Verbatim}
n := 27;
c0 := 4;
d0 := 5;
N:= 1000;
R<x>:=PolynomialRing(Integers());
f:=x^3-n*x^2+(n-1)*x-1;
A:=AssociativeAlgebra(f);
E:=EquationOrder(A);

function LMT_ICM(c,d,n)
  k:=Evaluate(f,c);
  if IsDivisibleBy(k,d) eq false then 
    return "there is not standard CS matrix
    corresponding to c,d,n";
  else
    X:= Matrix(IntegerRing(), 3, 3,
    [0,(k/(-d)),((c-1)*(n-c-1)),0,c,d,1,0,n-c]);
    bas1:=MatrixToIdeal(A,X);
    return ideal<E|bas1>;
  end if;
end function;

for d in [1 .. N] do
  for c in [1 .. d] do
    if IsDivisibleBy(Evaluate(f,c),d) eq true then
      if IsIsomorphic2(LMT_ICM(c,d,n),LMT_ICM(c0,d0,n)) eq
      true then
      [c,d,n];
      end if;
    end if;
  end for;
end for;
\end{Verbatim}
\hrulefill

\subsection{Table of representatives of \texorpdfstring{$C(\ztheta{n})$}{C(Z[theta])}}\label{subsection:table}

The representatives are contained \cite[Section 5.4.]{Kim-Yamada:2017-1} for $3 \le n \le 69$. We calculated representatives for $70 \le n \le 78$.
    
\begin{center}
\begin{longtable}{c||c|l}

\caption{Representatives of elements of $C(\mathbb{Z}[\theta_n])$ for $70\leq n\leq 78$}
\label{table:representatives}\\
\hline
$n$ & $\# C(\mathbb{Z}[\theta_n])$ & Representatives of elements of $C(\mathbb{Z}[\theta_n])$ \\ 
\hline
70&44&(1,1,70), (2,3,70), (2,5,70), (5,7,70), (2,9,70),\\
&& (4,11,70), (2,15,70), (8,17,70), (5,21,70), (17,23,70),\\
&& (12,25,70), (2,27,70), (9,29,70), (6,31,70), (26,33,70),\\
&& (12,35,70), (17,41,70), (12,43,70), (22,43,70), (36,43,70),\\
&& (10,47,70), (14,47,70), (19,49,70), (8,51,70), (37,55,70),\\
&& (48,59,70), (50,59,70), (47,63,70), (8,67,70), (52,71,70),\\
&& (48,73,70), (62,75,70), (29,81,70), (68,93,70), (47,105,70),\\
&& (17,115,70), (26,121,70), (37,125,70), \underline{\underline{(104,141,70)}}, \\
&& \underline{(47,151,70)},\underline{\underline{(37,155,70)}}, \underline{(59,187,70)}, \\
&& \underline{(186,199,70)}, \underline{(96,203,70)}\\
71&21&(1,1,71), (10,11,71), (3,13,71), (4,13,71), (12,13,71),\\
&& (6,17,71), (12,19,71), (8,23,71), (9,23,71), (3,31,71),\\
&& (12,31,71), (25,31,71), (26,37,71), (33,41,71), (45,59,71),\\
&& (24,67,71), (25,67,71), (62,79,71), (31,83,71), (83,103,71),\\
&& \underline{(133,149,71)}\\
72&23&(1,1,72), (4,5,72), (4,7,72), (10,17,72), (9,19,72),\\
&& (10,19,72), (15,19,72), (4,23,72), (11,23,72), (14,25,72),\\
&& (28,29,72), (4,35,72), (17,37,72), (11,41,72), (16,43,72),\\
&& (9,47,72), (18,49,72), (28,59,72), (43,61,72), (57,67,72),\\
&& (49,79,72), (29,95,72), \underline{(144,203,72)}\\
73&38&(1,1,73), (2,3,73), (3,5,73), (6,7,73), (5,9,73),\\
&& (9,13,73), (8,15,73), (14,17,73), (4,19,73), (20,21,73),\\
&& (14,23,73), (18,25,73), (14,27,73), (23,29,73), (21,31,73),\\
&& (13,35,73), (35,39,73), (4,43,73), (23,45,73), (14,51,73),\\
&& (50,53,73), (23,57,73), (52,59,73), (41,63,73), (48,65,73),\\
&& (68,75,73), (68,81,73), (48,85,73), (63,89,73), (23,95,73),\\
&& (87,97,73), (29,103,73), (83,105,73), (50,113,73), \underline{\underline{(23,145,73)}},\\
&& \underline{\underline{(23,171,73)}}, \underline{\underline{(41,189,73)}}, \underline{\underline{(178,191,73)}}\\
74&24&(1,1,74), (2,11,74), (8,11,74), (9,11,74), (2,13,74),\\
&& (11,19,74), (13,23,74), (19,23,74), (13,37,74), (39,41,74),\\
&& (27,43,74), (7,47,74), (10,53,74), (31,53,74), (33,53,74),\\
&& (7,59,74), (52,61,74), (31,67,74), (47,67,74), (63,67,74),\\
&& (37,79,74), (25,83,74), (77,89,74), \underline{(121,191,74)}\\
75&24&(1,1,75), (2,5,75), (3,7,75), (6,13,75), (22,25,75),\\
&& (2,29,75), (4,29,75), (11,29,75), (9,31,75), (13,31,75),\\
&& (22,31,75), (17,35,75), (34,37,75), (26,41,75), (38,49,75),\\
&& (40,59,75), (14,67,75), (42,107,75), (21,113,75), (49,127,75),\\
&& (50,127,75), (103,127,75), \underline{\underline{(101,163,75)}}, \underline{\underline{(31,197,75)}}\\

76&35&(1,1,76), (2,3,76), (2,7,76), (8,9,76), (13,17,76),\\
&& (16,17,76), (6,19,76), (2,21,76), (8,27,76), (27,31,76),\\
&& (21,37,76), (35,41,76), (35,43,76), (22,47,76), (24,47,76),\\
&& (30,47,76), (9,49,76), (16,49,76), (23,49,76), (30,49,76),\\
&& (37,49,76), (44,49,76), (47,51,76), (44,57,76), (32,59,76),\\
&& (44,63,76), (35,129,76), (90,137,76), \underline{\underline{(116,141,76)}}, \\
&& \underline{\underline{(44,147,76)}},\underline{\underline{(65,147,76)}}, \underline{\underline{(86,147,76)}}, \underline{\underline{(128,147,76)}}, \\
&& \underline{\underline{(98,153,76)}}, \underline{\underline{(46,173,76)}}\\
77&35&(1,1,77), (4,5,77), (5,7,77), (6,11,77), (7,13,77),\\
&& (8,13,77), (10,13,77), (12,17,77), (15,23,77), (9,25,77),\\
&& (5,29,77), (30,31,77), (19,35,77), (14,37,77), (27,41,77),\\
&& (5,49,77), (47,53,77), (39,55,77), (14,61,77), (22,61,77),\\
&& (41,61,77), (49,65,77), (59,65,77), (40,71,77), (61,77,77),\\
&& (33,91,77), (75,91,77), (35,97,77), (55,97,77), (40,113,77),\\
&& (84,115,77), (68,127,77), \underline{\underline{(34,145,77)}}, \underline{\underline{(92,149,77)}}, \underline{\underline{(61,253,77)}}\\
78&24&(1,1,78), (3,5,78), (5,11,78), (7,17,78), (17,19,78),\\
&& (18,23,78), (23,25,78), (15,29,78), (28,31,78), (10,37,78),\\
&& (25,41,78), (21,43,78), (11,53,78), (38,55,78), (9,59,78),\\
&& (56,71,78), (9,83,78), (19,83,78), (98,125,78), (93,131,78),\\
&& (111,131,78), \underline{(35,151,78)}, \underline{(18,157,78)}, \underline{(158,185,78)}\\

\end{longtable}
\end{center}
	\section{More Cappell-Shaneson homotopy 4-spheres are standard}\label{section:maintheorem}
	
        \subsection{Argument about CS spheres with small traces}\label{subsection:theoremB}
\begin{theorem}\label{theorem:B}Conjecture~\ref{conjecture:Gompf} is true for trace $n$ if $-64\leq n\leq 69$ or $n = -73, -69, -67, -66, 71, 72, 74, 78$. \end{theorem}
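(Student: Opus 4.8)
The plan is to reduce everything to finitely many explicit Gompf-equivalence computations. The cases $-64\le n\le 69$ and $n\in\{-69,-66,71,74\}$ are exactly the content of Kim and Yamada's results \cite{Kim-Yamada:2017-1}, so I would simply cite them. For the remaining traces I would first apply Theorem~\ref{theorem:sym2}: since $5-72=-67$ and $5-78=-73$, Conjecture~\ref{conjecture:Gompf} holds for trace $-67$ as soon as it holds for trace $72$, and for trace $-73$ as soon as it holds for trace $78$. Thus it suffices to prove the conjecture for the two traces $n=72$ and $n=78$.

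Fix $n\in\{72,78\}$. By Corollary~\ref{corollary:Aitchison-Rubinstein} the similarity classes of CS matrices with trace $n$ are in bijection with $C(\mathbb{Z}[\theta_n])$, and Table~\ref{table:representatives} records a minimal representative $(c_i,d_i,n)\in\mathcal{C}\mathcal{S}$ of each class. By Remark~\ref{remark:similarimpliesdiffeomorphic} and the reformulation in Conjecture~\ref{conjecture:reformulation} it is enough to show $(c_i,d_i,n)\sim(1,1,2)$ for every $i$. For the representatives with $d_i\le 134$ this is automatic: by Remark~\ref{remark:Gompf} one has $(c_i,d_i,n)\sim_G(c_i,d_i,n')$ for every $n'\equiv n\Mod{d_i}$, and since $\{-64,\dots,69\}$ consists of $134$ consecutive integers we may choose such an $n'$ with $-64\le n'\le 69$. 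Because $f_{n'}(c_i)-f_n(c_i)=(n'-n)c_i(1-c_i)\equiv 0\Mod{d_i}$, the triple $(c_i,d_i,n')$ lies in $\mathcal{C}\mathcal{S}$; it is a CS matrix of a trace for which Conjecture~\ref{conjecture:Gompf} is already established, so $(c_i,d_i,n')\sim(1,1,2)$ and therefore $(c_i,d_i,n)\sim(1,1,2)$.

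There remain only the representatives of the table with $d_i>134$, namely $(144,203,72)$ and, for $n=78$, the three triples $(35,151,78)$, $(18,157,78)$, $(158,185,78)$. Here a single $\sim_G$ move need not land in a trace whose conjecture is known, so I would exhibit for each of them a finite chain alternating $\sim_S$ and $\sim_G$: shift the trace by a multiple of $d_i$, then use Corollary~\ref{corollary:Aitchison-Rubinstein} together with the search program of Section~\ref{subsection:algorithm2} to replace the resulting ideal class by one of its representatives with strictly smaller $d$, and iterate until the $d$-value has dropped to $134$ or below, at which point the previous paragraph finishes the argument. Each such chain is finite and is produced by direct machine computation; collecting these chains for the four exceptional triples completes the proof for $n=72$ and $n=78$, and the symmetry of the first paragraph then yields $n=-67$ and $n=-73$.

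The step I expect to be the genuine obstacle is precisely this last one. Unlike the $d_i\le 134$ case, there is no closed-form bound guaranteeing that the alternation of $\sim_G$ and $\sim_S$ terminates, so one must rely on the explicit MAGMA search of Section~\ref{section:representatives} to certify a reducing chain for each of the four exceptional representatives; everything else is bookkeeping resting on Theorem~\ref{theorem:sym2}, Corollary~\ref{corollary:Aitchison-Rubinstein}, Remark~\ref{remark:Gompf}, and the previously established small-trace cases. It is worth noting that the double-underlined entries at traces $70,73,75,76,77$ are exactly the representatives for which no such chain was found, which is why those traces are absent from the statement.
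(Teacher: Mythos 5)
Your proposal follows essentially the same route as the paper: reduce to $n=72,78$ by the trace symmetry $n\leftrightarrow 5-n$, dispose of every tabulated representative with $d\le 134$ by a single $\sim_G$ shift into the range of known traces (the paper packages this as Lemma~\ref{lemma:descend}), and handle the four remaining special triples $(144,203,72)$, $(35,151,78)$, $(18,157,78)$, $(158,185,78)$ by explicit alternating $\sim_S$/$\sim_G$ chains. The only piece you leave unspecified is the concrete chains themselves, which the paper records explicitly (e.g.\ $(35,151,78)\sim_G(35,151,-73)\sim_S(107,131,-73)\sim_G(107,131,58)$); these machine-found certificates are exactly the remaining content you correctly identify.
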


In order to prove Theorem~\ref{theorem:B}, we use the following lemma.

\begin{lemma}[{\cite[Lemma 6.1.]{Kim-Yamada:2017-1}}]\label{lemma:descend}
Suppose Conjecture~\ref{conjecture:Gompf} is true for all $m$ in $3 \leq m \leq n-1$.
If $(c,d,n)$ satisfies $n \equiv n_0 \Mod{d}$ for some $n_0$ in $6-n \leq n_0 \leq n-1$, then $(c,d,n) \sim (1,1,2)$.
\end{lemma}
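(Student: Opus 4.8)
The plan is to establish Theorem~\ref{theorem:B} by reducing everything to a finite computation organized around Lemma~\ref{lemma:descend}, together with the symmetry of Theorem~\ref{theorem:sym2}. First I would observe that Theorem~\ref{theorem:sym2} lets us halve the work: since Conjecture~\ref{conjecture:Gompf} is true for trace $n$ if and only if it is true for trace $5-n$, the ranges pair up --- for instance $-64 \leq n \leq 69$ is symmetric under $n \mapsto 5-n$, and the extra traces $-73, -69, -67, -66$ correspond to $78, 74, 72, 71$ respectively. So it suffices to prove the statement for $3 \leq n \leq 69$ and for $n = 71, 72, 74, 78$ (the small nonpositive traces and traces $0,1,2$ being handled by $A_0$, Remark~\ref{remark:inverse}, and the symmetry, exactly as in \cite{Kim-Yamada:2017-1}).

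Next I would set up an induction on $n$. Kim and Yamada already established Conjecture~\ref{conjecture:Gompf} for $-64 \leq n \leq 69$ (equivalently $3 \leq n \leq 69$ after symmetry), so I take that as the base of the induction and concentrate on the four new traces $71, 72, 74, 78$, doing them in increasing order. Fix one such trace $n$. By Corollary~\ref{corollary:Aitchison-Rubinstein} every similarity class of CS matrices with trace $n$ is represented by some $(c,d,n) \in \mathcal{C}\mathcal{S}$, and by Remark~\ref{remark:changenbyc} I may assume $1 \leq c \leq d$; the list of minimal representatives is exactly Table~\ref{table:representatives}. For each entry $(c,d,n)$ in that table I want to produce a Gompf-equivalence to $(1,1,2)$. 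The tool is Lemma~\ref{lemma:descend}: since Conjecture~\ref{conjecture:Gompf} holds for all $m$ with $3 \leq m \leq n-1$ by the inductive hypothesis, it is enough to find, using the $\sim_G$ move $(c,d,n) \sim_G (c,d,n+kd)$ (equivalently, replacing $n$ by any residue mod $d$) possibly combined with $\sim_S$ moves and the explicit symmetry $X_{c,d,n}^* = X_{p_n(c),d,5-n}$ of Theorem~\ref{theorem:sym1}, some representative of the same Gompf class whose trace $n_0$ falls in the window $6-n \leq n_0 \leq n-1$. In practice this means: for each $(c,d,n)$, check whether $n \bmod d$ (or $(5-n) \bmod d$ after dualizing, or a value reachable after a change of $c$) lands in $[6-n, n-1]$; if so, Lemma~\ref{lemma:descend} immediately gives $(c,d,n) \sim (1,1,2)$.

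The main obstacle, and the only place real work is needed, will be the finitely many table entries for which no single application of Lemma~\ref{lemma:descend} suffices --- precisely the entries with large $d$ relative to $n$ (the underlined and doubly-underlined rows in Table~\ref{table:representatives}, which are the ones with $d > n-1$ and, for the doubly-underlined ones, $C(\ztheta{n})$ not a group so there is a genuine non-invertible class to worry about). For these I would chase the Gompf class more carefully: apply $\sim_S$ via the MAGMA isomorphism test (the code in Section~\ref{subsection:algorithm2}) to pass to a different representative $(c', d', n)$ of the \emph{same} ideal class with smaller $d'$, or apply the symmetry to trace $5-n$ where the relevant ideal class may have a smaller representative, iterating until the trace can be brought into the descent window; since each such class is one of finitely many and the computation terminates, this is a finite check. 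I expect the doubly-underlined entries (those tied to $n$ with $C(\ztheta{n})$ not a group, e.g. $n=76$) to require the most care, but note $76$ is not among the traces claimed in Theorem~\ref{theorem:B} --- the claimed new traces $71,72,74,78$ all have $C(\ztheta{n})$ a group, so every ideal class is invertible and the descent should go through for each table row after at most one auxiliary $\sim_S$ or symmetry step. Assembling these row-by-row verifications for $n = 71, 72, 74, 78$, and invoking symmetry for the negative traces, completes the proof.
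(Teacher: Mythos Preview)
Your proposal does not prove Lemma~\ref{lemma:descend} at all; it is a sketch of the proof of Theorem~\ref{theorem:B}, which \emph{uses} Lemma~\ref{lemma:descend} as a black box. The lemma itself has a short self-contained argument that you never give: from $n\equiv n_0\Mod d$ one applies a single $\sim_G$ move to obtain $(c,d,n)\sim_G(c,d,n_0)$; if $3\le n_0\le n-1$ the hypothesis immediately yields $(c,d,n_0)\sim(1,1,2)$, while if $6-n\le n_0\le 2$ then $3\le 5-n_0\le n-1$, so the hypothesis gives Conjecture~\ref{conjecture:Gompf} for trace $5-n_0$ and Theorem~\ref{theorem:sym2} transfers it to trace $n_0$, again forcing $(c,d,n_0)\sim(1,1,2)$. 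That is the entire content of the lemma.

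Everything in your write-up --- the induction on $n$, the appeal to Table~\ref{table:representatives}, the handling of underlined ``special'' entries via auxiliary $\sim_S$ moves and MAGMA searches, and the focus on $n=71,72,74,78$ --- belongs to the proof of Theorem~\ref{theorem:B}, and indeed matches the paper's approach there. But as a proof of the stated lemma it is off-target: you have assumed the lemma and reproduced its application rather than its justification.
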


	This means that $(c,d,n) \sim (1,1,2)$ if $d$ is sufficiently small. If $(c,d,n)$ does not satisfy $n \equiv n_0 \Mod{d}$ for any $n_0$ such that Conjecture~\ref{conjecture:Gompf} is true for $n_0$, we say $(c,d,n)$ is \textbf{special}. To prove Theorem~\ref{theorem:B}, we judge one by one whether $(c,d,n) \sim (1,1,2)$ or not for some special $(c,d,n)$. In the table~\ref{table:representatives} in subsection~\ref{subsection:table}, special $(c,d,n)$ are underlined. Moreover, if it is uncertain whether or not a special $(c,d,n)$ is equivalent to $(1,1,2)$, such $(c,d,n)$ are double underlined.\\
	For special cases, there are the following equivalences. Some relations are found by Kim and Yamada \cite{Kim-Yamada:2017-2}. 
    
    \begin{itemize}
    \item $(47,151,70)\sim_S(149,177,70)\sim_G(149,177,-107)\sim_S(84,121,-107)\sim_G\\
    (84,121,14)$.
    \item $(59,187,70)\sim_G(59,187,-117)\sim_S(203,239,-117)\sim_G(203,239,122)\sim_S(132,203,122)\sim_G(132,203,-81)\sim_S(65,89,-81)\sim_G(65,89,8)$.
    \item $(186,199,70)\sim_G(186,199,-129)\sim_S(57,151,-129)\sim_G(57,151,22)$.
    \item $(96,203,70)\sim_S(122, 215, 70)\sim_G(122,215,-145)\sim_S(103,181,-145)\sim_G
    (103,181,36)$.
    \item $(133,149,71)\sim_G(133,149,-78)\sim_S(44,61,-78)\sim_G(44,61,-17)$.
    \item $(144,203,72)\sim_S(134,205,72)\sim_G(134,205,-133)\sim_S(52,227,-133)\sim_G
    (52,227,94)\sim_S(46,97,94)\sim_G(46,97,3)$.
    \item $(121,191,74)\sim_S(174,197,74)\sim_G(174,197,-123)\sim_S\\
    (112,191,-123)\sim_G(112,191,68)$.
    \item $(35,151,78)\sim_G(35,151,-73)\sim_S(107,131,-73)\sim_G(107,131,58)$.
    \item $(18,157,78)\sim_G(18,157,-79)\sim_S(38,137,-79)\sim_G(38,137,58)$.
    \item $(158,185,78)\sim_G(158,185,-107)\sim_S(84,143,-107)\sim_G(84,143,36)$.
	\end{itemize}
\begin{proof}[Proof of Theorem~\ref{theorem:B}]
Note that we only have to prove Theorem~\ref{theorem:B} for $n = 72, 78$ since there is symmetry \cite[Theorem A.]{Kim-Yamada:2017-1} and this theorem is proved for $-64\leq n\leq 69$ and $n = -69,-66,71,74$ \cite[Theorem 3.2.]{Gompf:2010-1} \cite[Theorem B.]{Kim-Yamada:2017-1} \cite{Kim-Yamada:2017-2}.
We can check almost all representatives are equivalent to $(1,1,2)$ by Lemma~\ref{lemma:descend}. 
The above observation completes the proof.
\end{proof}
		\subsection{Infinite series of standard CS spheres}\label{subsection:infiniteseries}
        
\begin{theorem}\label{theorem:infiniteseries}
Let $(c,p,n_0)$ be a solution of the following equations. If $n_0 \equiv n^\prime \Mod{p}$ for $n^\prime$ such that Conjecture~\ref{conjecture:Gompf} is true for $n^\prime$, $X_{c,p,p^2 k + n_0}$ is not similar to $A_n$ for any integer $k,n$ and the corresponding CS spheres are diffeomorphic to the standard 4-sphere.
    \begin{enumerate}
		\item $(2c-1) n \equiv 3 c^2 -1 \Mod{p}$
        \item $(c^2 - c) n \equiv c^3 - c - 1 \Mod{p^2}$
    
	\end{enumerate}
\end{theorem}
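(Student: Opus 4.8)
The plan is to prove the two assertions of the theorem separately: first that $X_{c,p,p^2k+n_0}$ is similar to no $A_n$, and then that the corresponding CS spheres are standard; throughout, $p$ denotes the prime occurring in the congruences, exactly as in Theorem~\ref{theorem:A}.

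For the non-similarity statement I would argue inside the ideal class monoid. As in the proof of Corollary~\ref{corollary:p2}, congruences (1) and (2) involve the trace only through its residue classes modulo $p$ and modulo $p^2$, so if $(c,p,n_0)$ is a solution then so is $(c,p,N)$ with $N:=p^2k+n_0$ for every integer $k$. Theorem~\ref{theorem:A} then applies with trace $N$ and prime $p$, giving that $\langle\theta_N-c,p\rangle$ is a non-invertible ideal of $\ztheta{N}$; in particular $f_N(c)\equiv 0\Mod{p}$, so by Proposition~\ref{proposition:csm2} the triple $(c,p,N)$ really does define a standard CS matrix $X_{c,p,N}$, and by Corollary~\ref{corollary:Aitchison-Rubinstein} this matrix corresponds to the non-invertible class $[\langle\theta_N-c,p\rangle]$ in $C(\ztheta{N})$. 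Since similar matrices have equal trace and the trace of $A_n$ depends injectively on $n$, ruling out $X_{c,p,N}\sim_S A_n$ for all $n$ reduces to the single $A_n$ of trace $N$; but that $A_n$ corresponds under Corollary~\ref{corollary:Aitchison-Rubinstein} to $[\ztheta{N}]$, the identity and hence invertible, so by bijectivity of the Latimer-MacDuffee-Taussky correspondence it lies in a different similarity class from $X_{c,p,N}$. As $k$ and $n$ were arbitrary, this proves the first assertion.

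For the statement about CS spheres, the plan is to descend the trace to $n'$ via Gompf equivalence. Put $N=p^2k+n_0$, so that $N\equiv n_0\equiv n'\Mod{p}$. From the identity $f_N(c)-f_{n'}(c)=-(N-n')(c^2-c)$ and $p\mid N-n'$ one gets $f_{n'}(c)\equiv 0\Mod{p}$, hence $(c,p,n')\in\mathcal{C}\mathcal{S}$; choosing $j$ with $N+jp=n'$, relation (2) of Definition~\ref{definition:Gompfequivalence} gives $(c,p,N)\sim_G(c,p,n')$, so $(c,p,N)\sim(c,p,n')$. Because Conjecture~\ref{conjecture:Gompf} is true for $n'$, the standard CS matrix $X_{c,p,n'}$ of trace $n'$ is Gompf equivalent to $A_0=X_{1,1,2}$, whence $(c,p,N)\sim(1,1,2)$. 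By Remark~\ref{remark:Gompfequivalence}, $\Sigma^\varepsilon_{X_{c,p,N}}$ is then diffeomorphic to $\Sigma^\varepsilon_{A_0}$, which is the standard $S^4$ for both framings $\varepsilon$ (Aitchison-Rubinstein for $\varepsilon=0$, Gompf for $\varepsilon=1$).

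I do not expect a serious conceptual obstacle: the argument is essentially an assembly of Theorem~\ref{theorem:A}, the correspondence of Corollary~\ref{corollary:Aitchison-Rubinstein}, and the Gompf-equivalence machinery recorded in Section~\ref{section:preliminaries}. The points that need care are (i) checking that the congruence data is invariant under $n_0\mapsto n_0+p^2k$ and that it descends to the residue $n'$, so that $X_{c,p,n'}$ is a bona fide standard CS matrix before one invokes the hypothesis on $n'$, and (ii) the bookkeeping in the non-similarity argument, namely matching traces and keeping track of which ideal class is invertible.
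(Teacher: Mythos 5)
Your proposal is correct and follows essentially the same route as the paper's proof: Theorem~\ref{theorem:A} plus the Latimer--MacDuffee--Taussky correspondence for the non-similarity claim, and Gompf equivalence $(c,p,p^2k+n_0)\sim_G(c,p,n')\sim(1,1,2)$ for standardness. You merely fill in details the paper leaves implicit (invariance of the congruences under $n_0\mapsto n_0+p^2k$, membership of $(c,p,n')$ in $\mathcal{C}\mathcal{S}$, and the trace bookkeeping), all of which check out.
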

\begin{proof}
	By the hypothesis and Theorem~\ref{theorem:A}, $\langle \theta_{n_0+ p^2 k}-c,p \rangle$ is a non-invertible ideal. On the other hand, the ideal corresponding to $A_n$ is a principal ideal and obviously invertible. By Theorem~\ref{theorem:LMT}, we conclude that $X_{c,p,p^2 k + n_0}$ is not similar to $A_n$ for any integer $k,n$.\\
    If $n_0 \equiv n^\prime \Mod{p}$ for $n^\prime$ such that Conjecture~\ref{conjecture:Gompf} is true for $n^\prime$, $(c,p,p^2 k + n_0) \sim_G (c,p,n^\prime) \sim (1,1,2)$ holds. This completes the proof.\\
\end{proof}

	The following program gives us examples of Theorem~\ref{theorem:infiniteseries}. 
    
\hrulefill    
\begin{Verbatim}
i=1
for p in Primes():
    R.<c>=Integers(p)[]
    f = c^4-2*c^3+c^2+2*c-1
    L = [(-1)*Integer(list(l[0])[0]) for l in list(f.factor())\
    if l[0].degree() == 1]
    for c0 in L:
        g = (Integers(p)(2*c0-1))*c-Integers(p)(3*c0^2-1)
       
        n0 = (-1)*Integer(list(g.factor())[0][0].list()[0])
        
        #### checking whether (c,p,n_0) is equivalent to (1,1,2)
        if ((-64) > n0) and ((n0 + p) > 69) and\
        ((n0 + p) != 71) and ((n0 + p) != 74) and\
        (n0 != -66) and (n0 != -69) and\
        (n0 != -73) and (n0 != -67) and\
        ((n0 + p) != 72) and ((n0 + p) != 78):
            break
            
            
        N = c0^3-c0-1-n0*(c0^2-c0)
        if N % p == 0:
            h = Integers(p)(c0^2-c0)*c-Integers(p)(N/p)
            n1 = (-1)*Integer(list(h.factor())[0][0].list()[0])
            n = n1*p+n0
            if ((2*c0-1)*n-3*c0^2+1)%p == 0 and\
            (c0^3-n*c0^2+(n-1)*c0-1)%(p^2) == 0:
                print((Integers(p)(c0),p,Integers(p^2)(n),i))
                i=i+1
\end{Verbatim}
\hrulefill

    \begin{corollary}\label{corollary:cpn0}
        For the following $(c,p,n_0)$, CS spheres corresponding to $X_{c,p,p^2 k + n_0}$ are diffeomorphic to $S^4$ for all $k$ and $\varepsilon$. Moreover, $X_{c,p,p^2 k + n_0}$ and $A_n$ are not similar for all $k$ and $n$.
        
        \begin{equation*}
        \begin{split}
        (c,p,n_0)=&(2,7,27),(13, 17, 127), (11, 17, 167), (19, 23, 235), (11, 23, 440),  \\
        &(10, 23, 299),(8, 23, 94), (29, 31, 159), (11, 31, 807), (22, 41, 1402),  \\
        &(3, 41, 284),(37, 47, 975), (18, 47, 1239), (21, 73, 1405), (12, 73, 3929),  \\
        &(36, 89, 3438),(29, 89, 4488), (57, 97, 2537), (27, 97, 6877),  \\
        &(97, 103, 9341),(45, 103, 1273), (122, 127, 10211), (78, 127, 7977),  \\
        &(34, 127, 8157),(22, 127, 5923), (134, 137, 1562), (99, 137, 10386),  \\
        &(35, 137, 17212),(8, 137, 8388), (139, 151, 3643), (102, 151, 20901),  \\
        &(59, 151, 19163),(4, 151, 1905), (80, 167, 25009), (75, 167, 2885),  \\
        &(171, 199, 8020),(49, 199, 31586), (138, 223, 13651), (71, 223, 36083),  \\
        &(112, 239, 29450),(88, 239, 35909), (29, 239, 27676), (12, 239, 21217),  \\
        &(139, 241, 53010),(125, 241, 5076), (247, 257, 5395), (71, 257, 60659),  \\
        &(259, 281, 12619),(227, 311, 50323), (151, 311, 46403), (264, 313, 3111),  \\
        &(170, 313, 94863),(158, 353, 15224), (10, 353, 109390), (270, 367, 77025),  \\
        &(177, 367, 57669),(167, 433, 154186), (61, 433, 33308), (226, 479, 93903),  \\
        &(67, 479, 135543),(429, 577, 114186), (183, 577, 218748),  \\
        &(606, 647, 159187),(78, 647, 259427), (688, 751, 228364),  \\
        &(400, 761, 546433),(289, 761, 32693), (478, 769, 533704),  \\
        &(425, 769, 57662),(889, 911, 40983), (557, 929, 406861),  \\
        &(133, 929, 456185),(718, 967, 346133), (431, 977, 698619),  \\
        &(335, 977, 255915),(172, 977, 623317), (41, 977, 331217),  \\
        &(453, 1039, 961102),(178, 1039, 118424), (847, 1063, 826023),  \\
        &(339, 1063, 303951),(562, 1129, 418809), (794, 1321, 1171671),  \\
        &(664, 1321, 573375),(1015, 1361, 568965), (1009, 1361, 1283361),  \\
        &(1372, 1489, 658147),(1225, 1489, 1558979), (1043, 1553, 389743),\\
        &(700, 1553, 2022071),(1056, 1889, 3111234), (719, 1889, 457092),  \\
        &(2488, 2503, 1279092),(2264, 2777, 6659256), (1195, 2777, 1052478),  \\
        &(2899, 3847, 7836315),(735, 4567, 14107503), (57, 4567, 6749991),  \\
        &(4885, 4943, 578287),(5490, 6793, 33557371), (2340, 6793, 12587483),  \\
        &(7232, 7537, 3316292),(5937, 7537, 53490082), (6994, 7559, 27529860),  \\
        &(6131, 7559, 29608626),(7607, 8231, 19844902), (2253, 8231, 47904464),  \\
        &(3847, 8849, 33750030),(1936, 8849, 44554776), (8239, 9209, 30896240),  \\
        &(7427, 9209, 53909446),(9199, 9281, 1531299), (1231, 9281, 84605667),  \\
        &(5080, 9791, 29108701),(1495, 9791, 66754985), (9885, 12697, 457159),
        \end{split}
        \end{equation*}
        \begin{equation*}
        \begin{split}
        &(4948, 12697, 160756655),(7870, 13103, 133558858), (1821, 13103, 38129756), \\
        &(12218, 13831, 147286369),(11424, 13831, 44010197), (14972, 17327, 40475941),  \\
        &(17397, 18199, 311093683),(5191, 18199, 20109923), (23308, 24247, 581394539),  \\
        &(6785, 24247, 6522475),(21618, 32009, 964687258), (2345, 32009, 59888828),  \\
        &(56524, 91841, 2030145361),(110960, 114889, 7519485034),  \\
        &(90012, 114889, 5679997292),(122005, 142097, 18824442155),  \\
        &(383913, 566977, 296713805532),(324621, 566977, 24749113002),  \\
        &(461219, 859297, 690161571462),(183023, 859297, 48229762752),  \\
        &(490444, 1252129, 725046549615),(448374, 1252129, 842780483031),  \\
        &(1512011, 1766209, 2041158287487),(2788414, 3908497, 12639997219610),  \\
        &(2096714, 3908497, 2636351579404),(5564078, 5987777, 15849998997796),  \\
        &(4913921, 5987777, 20003474403938),\\
        &(17711432, 18378337, 204938177505669), \\
        &(14449587, 18378337, 132825093379905), \\
        &(27833855, 32455777, 673075952458623)
        \end{split}
        \end{equation*}
        
    \end{corollary}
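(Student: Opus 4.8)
The plan is to read off Corollary~\ref{corollary:cpn0} from Theorem~\ref{theorem:infiniteseries}; the entire content is the verification of that theorem's hypotheses for each of the $146$ listed triples $(c,p,n_0)$. Recall that Theorem~\ref{theorem:infiniteseries} asks two things of a triple: that it solve the simultaneous congruences $(2c-1)n_0\equiv 3c^2-1\Mod{p}$ and $(c^2-c)n_0\equiv c^3-c-1\Mod{p^2}$, and that $n_0\equiv n'\Mod{p}$ for some trace $n'$ at which Conjecture~\ref{conjecture:Gompf} is known. So the first step is to pin down the set $S$ of admissible traces: by Theorem~\ref{theorem:B} it contains every integer with $-64\le n'\le 69$ together with $-73,-69,-67,-66,71,72,74,78$, and by the symmetry $n'\mapsto 5-n'$ of Theorem~\ref{theorem:sym2} one may take $S$ closed under this involution (the displayed set already is, which explains the placement of its two tails).

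The second step is the enumeration, carried out by the SageMath program displayed just after Theorem~\ref{theorem:infiniteseries}: running over primes $p$ and over residues $c$ modulo $p$ that are roots of $c^{4}-2c^{3}+c^{2}+2c-1$ over $\F_p$ (equivalently, those $c$ for which the congruence pair is solvable in the trace), it solves congruence (1) for $n_0\Mod p$, then solves congruence (2) for the resulting class $\Mod{p^2}$, and retains $(c,p,n_0)$ exactly when some representative of $n_0\Mod p$ lies in $S$. Corollary~\ref{corollary:p2} (or simply the fact that (1) is a congruence mod $p$ and (2) mod $p^2$) guarantees that the two congruences persist with $p^{2}k+n_0$ in place of $n_0$ for every integer $k$. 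For the listed triples with $p\le 134$ the condition $n_0\equiv n'\Mod p$ with $n'\in S$ is automatic, since $[-64,69]\subseteq S$ already contains a complete residue system modulo any such $p$; for the finitely many remaining triples it is checked directly.

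Granting these verifications, the corollary is immediate from Theorem~\ref{theorem:infiniteseries}: fix a listed triple and an integer $k$, put $n=p^{2}k+n_0$; then $(c,p,n)$ solves both congruences, so $X_{c,p,n}$ is not similar to $A_m$ for any $m$ (because $\langle\theta_{n}-c,p\rangle$ is non-invertible by Theorem~\ref{theorem:A} while the class of $A_m$ is principal, hence invertible, and these lie in distinct similarity classes by Theorem~\ref{theorem:LMT} and Corollary~\ref{corollary:Aitchison-Rubinstein}), and since $n_0\equiv n'\Mod p$ for some $n'\in S$ we get $(c,p,n)\sim_G(c,p,n')\sim(1,1,2)$, whence $\Sigma^\varepsilon_{X_{c,p,n}}$ is diffeomorphic to $\Sigma^\varepsilon_{A_0}$ by Remark~\ref{remark:Gompfequivalence} --- and the latter is the standard $S^4$ for $\varepsilon=0$ (Aitchison--Rubinstein) and for $\varepsilon=1$ (Gompf).

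There is no real mathematical obstacle here; the delicate point is simply the trustworthiness of the computer search. One must be confident that SageMath factors $c^{4}-2c^{3}+c^{2}+2c-1$ correctly over each $\F_p$, solves the two linear congruences correctly, and implements the filter ``a representative of $n_0$ modulo $p$ lies in $S$'' correctly, in particular without an off-by-$p$ error in choosing the representative. I would hedge against this by re-deriving several entries by hand --- starting from $(2,7,27)$, which must recover the Kim--Yamada family $X_{2,7,49k+27}$ of Theorem~\ref{theorem:49k+27} --- and by re-running the search with an independent implementation of the congruence solving.
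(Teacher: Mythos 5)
Your proposal is correct and follows the same route as the paper: the corollary is exactly Theorem~\ref{theorem:infiniteseries} applied to the triples produced by the displayed SageMath search, with the admissible traces supplied by Theorem~\ref{theorem:B}, and the paper likewise offers no argument beyond this (the only content being the computer verification of the two congruences and of $n_0\equiv n'\Mod{p}$ for a known trace $n'$). Your caveat about trusting the computation, and the sanity check against the Kim--Yamada triple $(2,7,27)$, are reasonable but do not change the substance.
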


\clearpage
\bibliographystyle{plain}
\bibliography{main}
\end{document}